\documentclass[a4paper,12pt,times]{article}

\usepackage{setspace}
\usepackage{amstext} 
\usepackage[utf8]{inputenc}
\usepackage{amsmath, amsthm, amsfonts, amssymb}
\usepackage{float}
\usepackage{mathrsfs}
\usepackage{booktabs}
\usepackage{hyperref}
\usepackage{enumitem}
\usepackage{tikz-cd}
\usepackage[margin=1in]{geometry}
\usepackage{CJKutf8}
\usepackage{url}
\usepackage{ytableau}
\usepackage{caption}
\usepackage{tikz}
\usepackage{stmaryrd}
\usepackage{mathabx}
\usepackage{makecell}
\setlength{\headheight}{12pt}
\setlength{\headheight}{12pt}
\makeatletter
\def\makebb#1{\expandafter\def\csname bb#1\endcsname{{\mathbb{#1}}}\ignorespaces}
\def\makerm#1{\expandafter\def\csname rm#1\endcsname{{\rm #1}}\ignorespaces}
\def\makebf#1{\expandafter\def\csname bf#1\endcsname{{\bf #1}}\ignorespaces}
\def\makegr#1{\expandafter\def\csname gr#1\endcsname{{\mathfrak{#1}}}\ignorespaces}
\def\makescr#1{\expandafter\def\csname scr#1\endcsname{{\mathscr{#1}}}\ignorespaces}
\def\makecal#1{\expandafter\def\csname cal#1\endcsname{{\cal #1}}\ignorespaces}
\def\makeudl#1{\expandafter\def\csname udl#1\endcsname{{\underline{#1}}}\ignorespaces}
\def\doLetters#1{%
  #1A #1B #1C #1D #1E #1F #1G #1H #1I #1J #1K #1L #1M
  #1N #1O #1P #1Q #1R #1S #1T #1U #1V #1W #1X #1Y #1Z}
\def\doletters#1{%
  #1a #1b #1c #1d #1e #1f #1g #1h #1i #1j #1k #1l #1m
  #1n #1o #1p #1q #1r #1s #1t #1u #1v #1w #1x #1y #1z}
\doLetters\makebb \doLetters\makecal \doLetters\makerm \doLetters\makebf \doLetters\makescr
\doletters\makerm \doletters\makebf \doLetters\makegr \doletters\makegr \doLetters\makeudl \doletters\makeudl

\newcounter{InSection}
\setcounter{equation}{0}
\numberwithin{InSection}{subsection}
\newtheorem{thm}[InSection]{Theorem}
\newtheorem{lemma}[InSection]{Lemma}
\newtheorem{cor}[InSection]{Corollary}

\newtheorem{prop}[InSection]{Proposition}
\theoremstyle{definition}

\theoremstyle{definition}

\theoremstyle{definition}
\newtheorem{rem}[InSection]{Remark}
\newtheorem*{acknowledgements}{Acknowledgements}
\usepackage{titlesec}

\def\makeop#1{
  \expandafter\def
  \csname#1\endcsname{
    \mathop{\rm #1}
    \nolimits
  }
  \ignorespaces
}

\def\makeoplist#1 {
  \def\@@tmpa{#1}
  \def\@@tmpb{***}
  \ifx\@@tmpa\@@tmpb
  \else
    \makeop{#1}
    \expandafter\makeoplist
  \fi
}

\makeoplist 
ad Ad Alb Alt Aut Br can Card card Char Cl Coker coker Cond cond Corr
depth diag D Diff Disc disc Div div dom End Ext Fil Fr Frac Frob Gal GL
Gr gr GSp GSpin H Hom hom hom  Id id Im im Ind ind Int inv Irr irr Isom
Ker ker len length lk Lie Nr Nrd O Ob ob ord PGL Pic Pr pr Proj PSL rank
RD Res Sgn sgn SL SO Sp Sp sp Spec Spf Spin 
St st Stab stab Std std SU Supp supp Swan Sym Tor tor tors 
torsion Tr tr Trd U UT val Vol vol wt ***

\usepackage{enumitem}
\usepackage{comment}
\usepackage[backend=biber]{biblatex}
\DeclareLanguageMapping{english}{english-apa}

\addbibresource{ref.bib}

\newcommand{\Fq}{\mathbb{F}_q}
\newcommand{\F}{\mathbb{F}}

\newcommand{\B}{\mathcal{B}}
\newcommand{\C}{\mathbb{C}}

\newcommand{\nex}{\mathrm{next}}
\newcommand{\prev}{\mathrm{prev}}

\newcommand{\Z}{\mathbb{Z}}
\usepackage{tikz}
\usetikzlibrary{positioning}

\begin{document}

\title{Edge Zeta Functions and Eigenvalues for Spherical Buildings: A Uniform Hecke Algebra Approach}

\author{SHEN, Jianhao}
\maketitle

\begin{abstract}

For the Tits building $\B(G)$ of a finite group of Lie type $G(\mathbb{F}_q)$, we study the edge zeta function, which enumerates edge–geodesic cycles in the $1$–skeleton. We show that every nonzero edge eigenvalue becomes a power of $q$ after raising to a bounded exponent $k$ depending on the type of $G$. 

The proof is uniform across types using a Hecke algebra approach. This extends previous results for type~$\mathbf{A}$ and for oppositeness graphs to the full edge–geodesic setting and all finite groups of Lie type.

\end{abstract}
\tableofcontents
\section*{Introduction}
\addcontentsline{toc}{section}{Introduction}
Zeta functions of spherical buildings reveal deep connections between geometry, combinatorics, and representation theory. 
They generalize the Ihara zeta functions of regular graphs~\cite{Ihara1966} and their two–variable extensions for $q$–regular trees~\cite{Hashimoto1989}.
In higher rank, the geometry of a building reflects the combinatorics of its Weyl group; for buildings arising from algebraic groups, it also encodes the underlying symmetries of the group, linking combinatorial and algebraic structure.
The corresponding zeta function records weighted counts of closed geodesics or galleries, unifying spectral, combinatorial, and arithmetic data in a single analytic object.

The 1–dimensional spherical buildings, as well as finite quotients of trees (1–dimensional affine buildings), form regular graphs whose spectral and zeta–function properties are completely understood through the classical theory of Ihara zeta functions and their variants~\cite{serre2002,feit1964,lubotzky1994}. Motivated by this success, subsequent work extended the theory to higher–dimensional analogues arising from \emph{affine buildings} of reductive groups over local fields.
In particular, the edge and gallery zeta functions of finite quotients of two–dimensional affine buildings have been extensively investigated, revealing deep connections with $p$–adic $L$–functions and automorphic forms (\cite{kang2010,li2011,FLW2013,kangli2014,li2019}).
By contrast, the zeta functions of \emph{spherical buildings} over finite fields remain much less explored.
Beyond the one–dimensional case, previous studies have focused mainly on oppositeness graphs and their spectra~\cite{brouwer2010,sin2012}, leaving the general geodesic structure largely unexplored.

In earlier work~\cite{ShenAIM2024}, the present author introduced the \emph{edge zeta function} for spherical buildings of type~$\mathbf{A}_n$ and derived closed formulas via a character–theoretic analysis of induced Weyl–group representations.
This resolved the type~$\mathbf{A}$ case completely at the level of the 1–skeleton, and called for a generalization to other classical and exceptional types.

The present paper develops such a uniform framework for all finite spherical buildings associated with finite groups of Lie type, establishing the representation–theoretic origin of the corresponding edge eigenvalues.
For a finite thick spherical building~$\B(G)$ associated with a finite group of Lie type~$G(\F_q)$, we study its \emph{edge zeta function}~$Z_{\B}(u)$, which enumerates primitive closed geodesics in the $1$–skeleton of~$\B(G)$.

Our main theorem \ref{Mainthm} asserts that every nonzero edge eigenvalue $\lambda$ becomes a pure power of $q$ after being raised to an integer exponent $k=2m$ depending on the Lie type and the type orbit.  
This establishes a representation–theoretic explanation of the “power–of–$q$” phenomenon first observed in type~$\mathbf{A}$, and shows that the same structure persists across all finite types.  
The proof combines Luo’s recent decomposition theorem~\cite{luo2022} with Springer’s theorem on central elements of Iwahori–Hecke algebras~\cite[\S9.2]{geck_pfeiffer2000}. A refinement of the main result (Theorem~\ref{Mainthmv2}) further gives an explicit description of the associated spectral data and the zeta functions. Finally, we derive new closed formulas for the symplectic case (type~$\mathbf{C}$), and tabulate the corresponding cycle structures and the exponents $2m$.

\paragraph{Organization of the paper}
\begin{itemize}
  \item Section~\ref{sec:1} fixes notation and recalls the definition of geodesic cycles in the 1–skeleton of a spherical building. It then states the main theorem on the algebraic form of the edge eigenvalues.
  
  \item Section~\ref{sec:2} reformulates the problem on the geodesic edge graph $X_2(\B)$ and introduces its partite decomposition and the relative destination elements (RDEs) that encode adjacency.  

  \item Section~\ref{sec:3} performs the Hecke algebra reduction: by Luo’s decomposition and Springer’s centrality theorem, the RDE products collapse to a pair of double–coset operators, with explicit eigenvalue formulas, completing the proof of the main result.  

  \item Section~\ref{sec:4} presents the explicit computations—recovering type~$\mathbf{A}$ formulas, deriving new formulas for type~$\mathbf{C}$, and listing cycle data and half–lengths $m$ for all finite types other than type~$\mathbf{D}$.
\end{itemize}

In this way, the paper extends the type~$\mathbf{A}$ results of~\cite{ShenAIM2024} to all finite spherical buildings arising from groups of Lie type, showing that the edge zeta functions are uniformly controlled by the representation theory of Hecke algebras across all finite Lie types.
\section{Settings and Main Theorem}\label{sec:1}

The central object in our analysis is the edge zeta function of the $1$-skeleton of a spherical building, whose coefficients enumerate geodesic cycles.  
This section fixes the notation for the group-theoretic setting, formalizes the notion of geodesics in this context, and states our main theorem on the associated eigenvalues.

\paragraph{Root system and Weyl group of $G$}
Let $G$ be a finite group of Lie type defined over the finite field $\Fq$.  
Then $G$ admits a $BN$-pair, and the Weyl group $W = N / (N \cap B)$ is a Euclidean reflection group of a real vector space $V$ containing a root system $\Phi$.  
We choose a set $\Phi^+ \subset \Phi$ of positive roots, let $\Delta$ be the corresponding set of simple roots, and set
\[
S = \{\, s_\alpha : \alpha \in \Delta \,\} \subseteq W
\]
to be the reflections with respect to roots in $\Delta$.  
Then $(W,S)$ is a Coxeter system.

For $I \subseteq S$, let $\Delta_I$ be the subset of simple roots corresponding to $I$ and $W_I = \langle I\rangle$ the corresponding standard parabolic subgroup of $W$, with longest element $w_I$.  
The set $\Phi_I$ denotes the roots spanned by $\Delta_I$.  
The standard parabolic subgroup of $G$ of type $I$ is $P_I := B W_I B$.

If $\alpha,\beta \in \Delta_I$ satisfy $\beta=-w_I\alpha$, we call them \emph{opposite simple roots} in $\Delta_I$; if $i,j \in I$ satisfy $s_i = w_I s_j w_I$, we call $s_i$ and $s_j$ \emph{opposite simple reflections} in $W_I$. These notions are equivalent: simple roots are opposite exactly when their corresponding
reflections are opposite.  

Note that two opposite roots $\alpha$ and $\beta$ lie in the same connected component of the Dynkin diagram of $\Delta_I$, and $\alpha = \beta$ except when the component is of type $\mathbf{A}$, $\mathbf{D}_n$ ($n$ odd), or $\mathbf{E}_6$.

\paragraph{Spherical Tits buildings}
Let $\B(G)$ be the spherical Tits building associated with $G$, whose simplices are cosets of standard parabolic subgroups of $G$, ordered by reverse inclusion.  
In particular:
\begin{itemize}
    \item Vertices correspond to cosets of maximal parabolic subgroups, $g P_{S-\{r\}}$ with $g \in G$, $r \in S$. We call $r$ the \emph{type} of such a vertex.
    \item Edges correspond to cosets $g P_{S-\{r,s\}}$ for distinct $r,s \in S$.
    \item Two vertices $gP_{S-\{r\}}$ and $hP_{S-\{s\}}$ are \emph{opposite} if their 
      types $r,s$ are opposite in $W$, i.e.\ $s=w_0 r w_0$, and if moreover 
      $g^{-1}h \in P_{S-\{r\}}\, w_0\, P_{S-\{s\}}$, where $w_0$ denotes the longest 
      element of $W$.
\end{itemize}

\paragraph{Geodesics in the $1$-skeleton, zeta functions}
Let $X_1(\B)$ denote the $1$-skeleton of $\B(G)$, which is the graph that consists of vertices and edges of $\B(G)$ and forgets the higher structure. 

Let \(\big(aP_{S-\{r\}},\; bP_{S-\{s\}},\; cP_{S-\{t\}}\big)
\) be a length-$2$ path
in $X_1(\B)$. It is called a \emph{geodesic path} if and only if the first and third vertices are opposite in the link $\lk_{\B}(bP_{S-\{s\}})$.  
Equivalently (See Lemma ~\ref{lem:local-geod-equiv}):
\begin{equation}\label{eq:geodesic-condition-sec1}
    \begin{cases}
        r \text{ and } t \text{ are opposite types in } W_{S-\{s\}}, \\
        a^{-1}c \in P_{S-\{r\}}\, w_{S-\{s\}}\, P_{S-\{t\}}.
    \end{cases}
\end{equation}

A \emph{geodesic cycle} of length $n$ in $X_1(\B)$ is a cyclic sequence $(x_0, \dots, x_{n-1}, x_n=x_0)$ of vertices such that for each $0\leq i\leq n-1$, the triple $(x_{i-1},x_i,x_{i+1})$ is a geodesic path (indices mod~$n$).  
Let $N(l)$ be the number of geodesic cycles of length $l$.  
The \emph{edge zeta function} of $\B(G)$ is
\[
Z(\B, u) \;=\; \exp\!\left( \sum_{l=1}^\infty \frac{N(l)}{l} u^l \right).
\]
Then $1 / Z(\B, u) \in \Z[u]$ is a polynomial with constant term $1$ (\S~\ref{sec:def-X2}); writing
\[
\frac{1}{Z(\B, u)} = \prod_i (1 - \lambda_i u),
\]
the nonzero $\lambda_i$ are called the \emph{edge eigenvalues} of $\B(G)$.

The main theorem of this paper describes the algebraic form of these eigenvalues.
\begin{thm}\label{Mainthm}
Let $G$ be a finite group of Lie type over $\Fq$ and $\B(G)$ its associated building.  
For each edge eigenvalue $\lambda$ of $G$, there exists an integer $k > 0$ such that $\lambda^k$ is an integer power of $q$.
\end{thm}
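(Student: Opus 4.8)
The plan is to express the edge zeta function as the reciprocal characteristic polynomial of one finite-dimensional operator, block-diagonalize that operator via the $G$-action, and then use Luo's theorem to pin down the eigenvalues. Concretely, let $\Omega$ be the set of directed edges of $X_1(\B)$, i.e.\ ordered pairs $(x_0,x_1)$ of adjacent vertices, and let $T\colon\C[\Omega]\to\C[\Omega]$ be the geodesic transfer operator $T\cdot(x_0,x_1)=\sum_{x_2}(x_1,x_2)$, where $x_2$ runs over all vertices with $(x_0,x_1,x_2)$ a length-$2$ geodesic. A closed geodesic path of length $l$ is the same as a closed walk of $T$ of length $l$, so $N(l)=\Tr(T^l)$, and the identity $\exp\bigl(\sum_{l\ge1}\Tr(T^l)u^l/l\bigr)=\det(I-uT)^{-1}$ gives $1/Z(\B,u)=\det(I-uT)$ (here $T$ is essentially the adjacency operator of the auxiliary graph $X_2(\B)$ of Section~1). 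Hence the edge eigenvalues of $\B$ are precisely the nonzero eigenvalues of $T$, and it suffices to show each of them has a power equal to a power of $q$.

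Next I would pass to the representation theory of $G$. Because $P_I\cap P_J=P_{I\cap J}$ in a group with a $BN$-pair, a directed edge is the datum of a coset $gP_{S-\{r,s\}}$ together with an ordered pair $(r,s)$ of distinct elements of $S$, so $\C[\Omega]\cong\bigoplus_{(r,s)}\C[G/P_{S-\{r,s\}}]$ as a $\C[G]$-module. The operator $T$ is $G$-equivariant; by the description of geodesics in Section~1 it carries the $(r,s)$-summand into the sum of the $(s,t)$-summands over all $t$ of opposite type to $r$ in $W_{S-\{s\}}$, and on each such block it is, up to normalization, the double-coset operator attached to the longest element $w_{S-\{s\}}$ of the corank-one parabolic $W_{S-\{s\}}$. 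Decomposing $\C[\Omega]$ into $G$-isotypic components and using Harish-Chandra theory, the restriction of $T$ to the multiplicity space of an irreducible becomes a finite matrix with entries in $\End_G\bigl(\bigoplus_J\C[G/P_J]\bigr)$, an algebra controlled by the generic Iwahori--Hecke algebra $\mathcal H=\mathcal H_q(W,S)$ acting on its parabolic modules; so the characteristic polynomial of $T$ is the specialization at $q$ of a universal expression in the standard basis elements $T_w$ of $\mathcal H$.

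The core input, generalizing Brouwer, concerns longest elements in $\mathcal H$. Conjugation by $T_{w_0}$ realizes the diagram automorphism $s\mapsto w_0sw_0$, an involution, so $T_{w_0}^2$ is central in $\mathcal H$ and acts on each irreducible $\mathcal H$-module as a scalar; a determinant computation together with the specialization $T_{w_0}^2\mapsto1$ at $q=1$ (the Hecke-algebra form of Brouwer's theorem on the oppositeness graph of $G/P$) shows this scalar is an integral power of $q$. What is needed here is the analogue for the products $w_{S-\{\sigma_0\}}w_{S-\{\sigma_1\}}\cdots w_{S-\{\sigma_{m-1}\}}$ of corank-one longest elements arising when one traverses a geodesic cycle of type-sequence $(\sigma_0,\dots,\sigma_{m-1})$ subject to the opposite-type rule. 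Luo's theorem on decomposing the longest element is exactly what supplies this: it identifies which such products occur and shows that, after a uniformly bounded power, each equals $w_0^{2k}=e$ up to an element of finite order, the length defect contributing only an explicit power of $q$. Propagating this through the previous paragraph, a suitable power $T^{k'}$ acts on each $G$-isotypic component as $q^{m'}$ times a finite-order operator, so every eigenvalue $\lambda$ of $T$ satisfies $\lambda^{k''}=q^{m''}$ for integers $k'',m''$ --- which is Theorem~\ref{Mainthm}.

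I expect the main obstacle to be the last step. The corank-one longest elements $w_{S-\{s\}}$ do not commute and act on different parabolic modules, so controlling the spectrum of $T$ demands an exact understanding of which products $w_{S-\{\sigma_0\}}\cdots w_{S-\{\sigma_{m-1}\}}$ the opposite-type rule allows, together with a proof that each of them becomes, after a bounded power, a power of $q$ times a finite-order element; extracting this from Luo's theorem is the substance of Section~3. A secondary technical point is that the parabolic modules $\C[G/P_J]$ need not be multiplicity-free, so the argument must be run at the level of characteristic polynomials over $\mathcal H$ rather than by treating the blocks of $T$ as scalars.
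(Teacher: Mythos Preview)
Your overall architecture matches the paper's: form the edge operator $T$ on $\C[\Omega]\cong\bigoplus_{(r,s)}\C[G/P_{S-\{r,s\}}]$, exploit $G$-equivariance to reduce to Hecke-algebra-type computations, and invoke Luo's theorem plus Springer's theorem on $T_{w_0}^2$. Two points need correction.

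First, a minor one: for a given $(r,s)$ there is a \emph{unique} $t$ of opposite type to $r$ in $W_{S-\{s\}}$, so $T$ sends the $(r,s)$-block to a single $(s,t)$-block, not a sum. This is why the paper organizes the computation by the ``next type'' permutation $\nex(r,s)=(s,w_{S-\{s\}}rw_{S-\{s\}})$ and breaks $X_2(\B)$ into cyclic-partite pieces $X_2(\B)|_C$, one per orbit $C$. You should do this decomposition explicitly; it is what turns the block operator into a single product $a_{P_0w_1P_1}\cdots a_{P_{c-1}w_cP_c}$ rather than a matrix of such terms.

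Second, and this is the real gap: your description of what Luo's theorem provides is off. You write that the relevant Weyl products, ``after a uniformly bounded power, each equals $w_0^{2k}=e$ up to an element of finite order, the length defect contributing only an explicit power of $q$.'' That is not the mechanism. Luo's theorem (Theorem~\ref{LuoThm}) says that for the sequence $s_0,s_1,\dots$ determined by the opposite-type rule there is an $m$ with
\[
w_{S-\{s_0,s_1\}}\,w_S \;=\; \bigl(w_{S-\{s_0,s_1\}}w_{S-\{s_1\}}\bigr)\cdots\bigl(w_{S-\{s_{m-1},s_m\}}w_{S-\{s_m\}}\bigr)
\]
\emph{and the lengths add}. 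The length-additivity is the entire point: it is exactly the hypothesis of Proposition~\ref{concatenateA}, which lets you collapse the group-algebra product $a_{P_0w_1P_1}\cdots a_{P_{m-1}w_mP_m}$ to the single element $a_{P_0w_SP_m}$. There is no ``length defect'' and no ``finite-order operator'' correction; after $2m$ steps one lands precisely on $a_{P_0w_SP_m}a_{P_mw_SP_0}$, the oppositeness operator, whose eigenvalues are integer powers of $q$ by Springer's theorem (Section~2.4). Without length-additivity the product $T_{w_1}\cdots T_{w_m}$ in the Hecke algebra would be a nonnegative combination of many $T_w$'s and you would have no control over its spectrum. So the step you flagged as ``the main obstacle'' is not merely difficult to extract from Luo's theorem---your stated version of what it yields would not suffice.
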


For type $\mathbf{A}$, one may take $k=6$ for all $\lambda$ (\cite[\S7.2, 8.4]{ShenAIM2024}).  
For types $\mathbf{B}$ or $\mathbf{C}$, one may take $k=8$ (Section~\ref{subsec:typeC}), and for type $\mathbf{D}$, $k$ can be $6$ or $8$ depending on $\lambda$.  
Explicit $k$ (or $m = k/2$) values for various graphs appear in the Section~\ref{sec:4.4}.

\paragraph{Notation and conventions.}

All groups are finite, and all representations are over~$\C$.
For elements \(a,x\in \C G\), write  
\(a_\ell(x)=ax\) and \(a_r(x)=xa\)  
for left and right multiplication, respectively.  
Conjugation on the right by~\(g\in G\) is denoted  
\(a^g=g^{-1}ag\) and \(I^g=g^{-1}Ig\).

For a Weyl group \(W\) with generating set \(S\),  
and \(I\subseteq S\), set  
\(W_I=\langle I\rangle\subseteq W\) and \(w_{W_I}=w_I\)  
for the corresponding standard parabolic subgroup and its longest element.

For any linear operator \(T\), write  
\(\Spec(T)\) for the multiset of eigenvalues of $T$.  
For a finite directed graph \(X\), \(\Spec(X)\) denotes the multiset of eigenvalues of its adjacency operator.

Throughout, \(e_P\) denotes the idempotent 
\(|P|^{-1}\sum_{x\in P}x\) in the group algebra~\(\C G\),  
and \(a_{P w Q}\) denotes the double–coset sum  
\(a_{P w Q}=|Q|^{-1}\sum_{x\in P w Q}x\).  
All tensor products are taken over~\(\C\).
\section{From the Building Zeta Function to a Group Algebra Problem}\label{sec:2}
In Section~\ref{sec:1} we defined the building zeta function via closed galleries. We now
reformulate it using the geodesic edge graph $X_2(\B)$, whose closed walks are the geodesics of $\B$ and hence give the same zeta function. We then decompose \(X_2(\mathcal B)\) into type-orbits (multipartite components) and express the component adjacency via relative destination elements (RDEs) in the group algebra, hence reducing the computation of the zeta function to spectral data of explicit Hecke operators.

\subsection{From geodesic cycles to the geodesic edge graph}\label{sec:def-X2}

We begin by verifying a characterization of geodesic paths from Section~\ref{sec:1}.

\begin{lemma}\label{lem:local-geod-equiv}\label{lem:geodesic-criterion}
Let 
\[
x_{-}=aP_{S-\{r\}},\qquad x=bP_{S-\{s\}},\qquad x_{+}=cP_{S-\{t\}}
\]
be a length-$2$ path in $X_1(\B)$. Then the following are equivalent:
\begin{enumerate}
\item[(i)] $x_{-}\!\to x \to x_{+}$ is geodesic at $x$, i.e.\ $x_{-}$ and $x_{+}$ are opposite in the link $\lk_{\B}(x)$.
\item[(ii)] $r$ and $t$ are opposite types in $W_{S-\{s\}}$ and 
\[
a^{-1}c \in P_{S-\{r\}}\; w_{S-\{s\}}\; P_{S-\{t\}}.
\]
\end{enumerate}
\end{lemma}

\begin{proof}
\textbf{Step 1: Normalization.}
Left multiplication by $b^{-1}$ preserves adjacency and opposition, so we may assume $b=1$ and
\[
x = P_{S-\{s\}},\quad x_- = aP_{S-\{r\}},\quad x_+ = cP_{S-\{t\}}.
\]
From adjacency, 
\(
x_- \cap x = aP_{S-\{r\}} \cap P_{S-\{s\}}
\)
is a parabolic subgroup of type $S-\{r,s\}$, hence of the form $a'P_{S-\{r,s\}}$
with $a'\in P_{S-\{s\}}$, and with $a'P_{S-\{r\}}=aP_{S-\{r\}}$, so we may replace
$a$ by $a'$. The same argument applies to $c$. Thus we may assume $a,c\in
P_{S-\{s\}}$ and
\[
(x_-,x)=aP_{S-\{r,s\}},\qquad (x,x_+)=cP_{S-\{s,t\}}.
\]

\textbf{Step 2: Link and the Levi factor.}
Let $U_s$ be the unipotent radical of $P_{S-\{s\}}$ and $L_s:=P_{S-\{s\}}/U_s$ its Levi factor. The link of $x$ is canonically the building of $L_s$:
\[
\lk_\B(x)\;\cong\;\Delta(L_s,\,S-\{s\}).
\]
For vertices of type $r$, the identification is given by modding out $U_s$:
\[
\phi_x^{(r)}:\ 
P_{S-\{s\}}/P_{S-\{r,s\}}
\;\xrightarrow{\ \ \cong\ \ }\;
\frac{P_{S-\{s\}}/U_s}{\,P_{S-\{r,s\}}/U_s\,}=L_s / P^{\,L_s}_{(S-\{s\})-\{r\}},
\]
\[
a'P_{S-\{r,s\}}\ \longmapsto\ \overline{a'}\,P^{\,L_s}_{(S-\{s\})-\{r\}},
\]
where $U_s \subset P_{S-\{r,s\}}$ ensures the map is well-defined, and $P^{\,L_s}_{(S-\{s\})-\{r\}}$ denotes the standard parabolic subgroup of $L_s$ of type $(S-\{s\})-\{r\}$.

Consequently, the two vertices $x_-$ and $x_+$ on $\lk_\B(x)$ project respectively to
\[
v_- \;=\; \phi_x^{(r)}\!\left(aP_{S-\{r,s\}}\right) 
        = \overline{a}\,P^{\,L_s}_{(S-\{s\})-\{r\}},
\qquad
v_+ \;=\; \phi_x^{(t)}\!\left(cP_{S-\{s,t\}}\right) 
        = \overline{c}\,P^{\,L_s}_{(S-\{s\})-\{t\}},
\]
of $\Delta(L_s,S-\{s\})$. 

Hence, condition (i) is equivalent to $v_-$ and $v_+$ being opposite in the link building $\Delta(L_s,S-\{s\})$.  
In building terms, this means that  
$r$ and $t$ are opposite types in $W_{S-\{s\}}$ and
\[
\overline{a}^{-1}\,\overline{c} \ \in\ 
P^{\,L_s}_{(S-\{s\})-\{r\}}\,
w_{S-\{s\}}\,
P^{\,L_s}_{(S-\{s\})-\{t\}}.
\]
Lifting to $G$ gives
\(a^{-1}c \ \in\ P_{S-\{r\}}\,w_{S-\{s\}}\,P_{S-\{t\}}.\)
Therefore, (i) and (ii) are equivalent.
\end{proof}

This local criterion lets us reformulate the problem of counting geodesic cycles in $X_1(\B)$ as counting closed walks in a new directed graph $X_2(\B)$.

\paragraph{Definition of the geodesic edge graph $X_2(\B)$}\label{def:X2}
The \emph{geodesic edge graph} $X_2(\B)$ is defined as follows:
\begin{itemize}
    \item \textbf{Vertices:} a vertex of $X_2(\B)$ is a directed edge of $X_1(\B)$, written as
    \[
    (gP_{S-\{r,s\}},\; r\to s) := \big(gP_{S-\{r\}},\ gP_{S-\{s\}}\big),
    \]
    with $g \in G$ and distinct $r,s \in S$. Its \emph{type} is the ordered pair $(r,s)$.
    \item \textbf{Edges:} there is a directed edge from $(gP_{S-\{r,s\}},\, r \to s)$ to $(hP_{S-\{s,t\}},\, s \to t)$ if and only if $gP_{S-\{s\}} = hP_{S-\{s\}}$ and the condition of Lemma~\ref{lem:local-geod-equiv} holds for the triple
    \[
    \big(gP_{S-\{r\}},\; gP_{S-\{s\}} = hP_{S-\{s\}},\; hP_{S-\{t\}}\big).
    \]
\end{itemize}

By construction, each closed walk in $X_2(\B)$ corresponds uniquely to a geodesic cycle of the same length in $X_1(\B)$, and conversely.  
More precisely:
\[
(x_0\to x_1 \to \dots \to x_{n-1} \to x_0)
\quad\longleftrightarrow\quad
\big((x_0,x_1),\ (x_1,x_2),\ \dots,\ (x_{n-1},x_0),\ (x_0,x_1)\big),
\]
is a bijection between geodesic $n$-cycles in $X_1(\B)$ and closed $n$-walks in $X_2(\B)$.

For a finite directed graph $X$, the \emph{closed-walk zeta function} $Z_c(X,u)$ is defined by
\[
Z_c(X,u) \;=\; \exp\!\left( \sum_{l=1}^\infty \frac{N_c(l)}{l} u^l \right),
\]
where $N_c(l)$ denotes the number of closed walks of length $l$ in $X$.  

Under the above bijection we have $N(l) = N_c(l)$, and hence
\[
Z(X_1(\B),u) \;=\; Z_c(X_2(\B),u),
\]
so the nonzero eigenvalues of $X_1(\B)$ coincide with the nonzero eigenvalues of $X_2(\B)$. Therefore, the edge eigenvalues of $\B(G)$ coincides with the nonzero spectrum of $X_2(\B)$.

This reduction will serve as the starting point for the structural analysis of $X_2(\B)$ in the following subsections.

\emph{Remark.}  
$X_2(\B)$ was  introduced in \cite[\S2]{ShenAIM2024} under the name \emph{geodesic edge graph}.  
From a graph-theoretic perspective, $X_2(\B)$ is a subgraph of the \emph{oriented line graph} of $X_1(\B)$, whose vertices represent directed edges of $X_1(\B)$ and where adjacency encodes consecutive edge-pairs (see \cite{KotaniSunada2000}).   In our case, the adjacency relation is further restricted by the local geodesic condition of Lemma~\ref{lem:local-geod-equiv}.

\subsection{Partite Decomposition of $X_2(\B)$}\label{sec:partite-decomp}
Vertex types in $X_2(\B)$ rotate cyclically under the local geodesic rule.  
This induces a canonical orbit decomposition, splitting $X_2(\B)$ into disjoint multipartite components whose spectra can be studied separately.

\paragraph{Types and adjacency} Vertices of $X_2(\B)$ naturally come with a \emph{type}: a vertex 
\((gP_{S-\{r,s\}},\, r\to s)\) has type $(r,s)$ with $r\neq s\in S$.
By Lemma~\ref{lem:local-geod-equiv}, a vertex of type $(r,s)$ connects only to vertices of type $(s,t)$ with
\[
t \;=\; w_{S-\{s\}}\, r\, w_{S-\{s\}}.
\] We may therefore introduce the next-type map.

\paragraph{The next–type map and its orbits}

For $(r,s)\in S^2 \setminus \Delta(S)$, define
\[
\nex(r,s) := \big(s,\, w_{S-\{s\}} r w_{S-\{s\}}\big),
\]
This map is invertible, with inverse
\[
\prev(r,s) := \big(w_{S-\{r\}} s w_{S-\{r\}},\, r\big).
\]
Hence, $\nex$ partitions $S^2\setminus \Delta(S)$ into disjoint orbits
\[
S^2\setminus \Delta(S) \;=\; C_1 \sqcup \cdots \sqcup C_\ell,
\]
called the \emph{type orbits}.
Each orbit $C_i$ consists of types reachable by iterating $\nex$, and no edge of $X_2(\B)$ connects vertices of different orbits. Thus the graph decomposes canonically into components according to type orbits.

\paragraph{Components and factorization of  zeta function}
For each orbit $C_i$, let $X_2(\B)|_{C_i}$ be the induced subgraph on all vertices of types in $C_i$.  
Each $X_2(\B)|_{C_i}$ is a cyclic $c$-partite directed graph ($c=|C_i|$), and $X_2(\B)$ decomposes naturally as
\[
X_2(\B) \;=\; X_2(\B)|_{C_1}\ \sqcup \cdots \sqcup\ X_2(\B)|_{C_\ell}.
\]

Since the components are disjoint, their closed-walk zeta functions multiply:
\[
Z_c(X_2(\B),u) \;=\; \prod_{i=1}^\ell Z_c\big(X_2(\B)|_{C_i},u\big),
\]
and the spectrum of $X_2(\B)$ is the union of the spectra of the $X_2(\B)|_{C_i}$.  
By Section~\ref{sec:def-X2}, the same holds for $Z(X_1(\B),u)$ and for the edge eigenvalues of $\B(G)$.

Thus the proof of Theorem~\ref{Mainthm} reduces to analyzing the eigenvalues of each component $X_2(\B)|_{C_i}$.

\subsection{Transitive Partite Action and RDE Operators}\label{sec:transitive-RDE}

We fix a type orbit \( C = \{ t_0, \dots, t_{c-1} \} \) (length \( c \)) with
\[
t_i = (r_i, r_{i+1}), \qquad \nex(t_i) = t_{i+1} \quad (\text{indices mod } c),
\]
and analyze the zeta function and eigenvalues of the component \( X_2(\B)|_C \).

\paragraph{Cyclic partite structure and \( G \)-action}
Edges in \( X_2(\B)|_C \) only run from type \( t_i \) to type \( t_{i+1} \), so \( X_2(\B)|_C \) is a cyclic \( c \)-partite directed graph.  
Vertices of type \( t_i = (r_i, r_{i+1}) \) take the form
\[
\left(gP_{S-\{r_i, r_{i+1}\}},\, r_i \to r_{i+1} \right) \qquad (g \in G).
\]
Left multiplication by \( G \) acts transitively on each type class and preserves adjacency (as per Lemma~\ref{lem:local-geod-equiv} (ii)).  
Thus, \( G \) acts \emph{partite-transitively} on \( X_2(\B)|_C \) in terms of \cite[\S~3.2]{ShenAIM2024}.

\paragraph{Eigenvalues for partite-transitive graphs (review)}
We now recall the group algebra method to find zeta functions for partite-transitive graphs(cf. \cite[\S2–3]{ShenAIM2024}):

Let $X=(V,E)$ be a directed graph such that $V=\bigsqcup_{i=0}^{c-1} V_i$ and $E\subseteq \bigcup_{i=0}^{c-1} V_i\times V_{i+1}$ (indices mod c). Suppose a finite group $G$ acts on $X$, and $G$ preserves each $V_i$ and acts transitively on each $V_i$. 
\begin{enumerate}
\item[(1)] \textbf{Basepoints and stabilizers.}  
Pick \( v_i \in V_i \) (the set of vertices of type \( t_i \)) and let
\(
P_i := \Stab_G(v_i).
\)

\item[(2)] \textbf{Block adjacency maps.}  
Let \( \C[V_i] \) denote the complex span of \( V_i \), and define the linear map
\[
T_i: \C[V_i] \to \C[V_{i+1}], \qquad v \mapsto \sum_{v \to w} w.
\]
Set \( T_C := T_{c-1} \cdots T_1 T_0: \C[V_0] \to \C[V_0] \). Then, the zeta function
\[
Z_c \left( X_2(\B)|_C, u \right) = \prod_{\lambda\in\Spec(T_C)} \frac{1}{1 - \lambda u^c},
\]
where \( \lambda \) runs over the eigenvalues of \( T_C \) (See for example \cite{ShenAIM2024} Prop. 2.1.11 and Prop. 3.2.9).  Hence, the nonzero eigenvalues of \( X_2(\B)|_C \) are the \( c \)-th roots of the eigenvalues of \( T_C \).

\smallskip
\noindent
\textbf{Remark.}
Each map \(T_i\) is a \emph{Hecke operator}: it acts by summing over the $w$ such that there is an edge from $v$ to $w$.
Thus, \(T_C\) represents a composite Hecke operator corresponding to the $c$-step product of such double cosets.

\item[(3)] \textbf{Hecke operators and RDEs.}  
Define the map
\[
\alpha_i: \C[V_i] \xrightarrow{\ \cong \ }\C[G] e_{P_i}, \qquad g \cdot v_i \mapsto g e_{P_i},
\]
where \( e_{P_i} := |P_i|^{-1} \sum_{x \in P_i} x \in \C[G] \).  
Because \( T_i \) is \( G \)-equivariant, there exists a \( G \)-linear map \( \phi_i \) making the following diagram commute. 
\begin{center}
\begin{tikzcd}[column sep=large]
\C[V_i] \arrow[r, "\alpha_i"] \arrow[d, "T_i"'] & \C G e_{P_i} \arrow[d, "{\phi_i = D(v_i, v_{i+1}, 1)}_r"] \\
\C[V_{i+1}] \arrow[r, "\alpha_{i+1}"'] & \C G e_{P_{i+1}}
\end{tikzcd}
\end{center}

Since \( \phi_i \) is \( G \)-linear, it is given by right multiplication by an element
\[
D(v_i, v_{i+1}, 1) \in e_{P_i} \C[G] e_{P_{i+1}},
\]
called a 1-step \emph{relative destination element} (RDE). Here, $x_r$ denotes right multiplication by $x$.

\item[(4)] \textbf{\( c \)-step operator as a product of RDEs.}  
Under \( \alpha_0 \), the \( c \)-step map \( T_C \) becomes 
\[
\begin{aligned}
T_C &= T_{c-1} \circ \cdots \circ T_1 \circ T_0 \\
    &= D(v_{c-1}, v_0, 1)_r \circ \cdots \circ D(v_1, v_2, 1)_r \circ D(v_0, v_1, 1)_r \\
    &= (D(v_0, v_1, 1) D(v_1, v_2, 1) \cdots D(v_{c-1}, v_0, 1))_r
\end{aligned}
\]
Let \( D(v_0, v_0, c) = D(v_0, v_1, 1) D(v_1, v_2, 1) \dots D(v_{c-1}, v_0, 1) \). 
Thus, the eigenvalues of \( T_C \) coincide with those of \( D(v_0, v_0, c)_r \) on \( \C[G] e_{P_0} \). We call $D(v_0, v_0, c)$ a $c$-step relative destination element
\end{enumerate}

\paragraph{Applying the procedure}
We may now apply the procedure to the component \( X_2(\B)|_C\).
\begin{enumerate}
\item[(1)] \textbf{Base points.}   For the component \( X_2(\B)|_C \), fix basepoints
\[v_i = (P_{S-\{r_i,r_{i+1}\}},\, r_i \to r_{i+1})\] for each type $r_i \to r_{i+1}$.
Then \( \Stab_G(v_i) = P_i = P_{S-\{r_i,r_{i+1}\}} \), and \( v_{i+1} = (P_{i+1}, r_{i+1} \to r_{i+2}) \).  

\item[(2)] \textbf{Length–1 RDE.}  By the commutative diagram above,  
\[
D(v_i, v_{i+1}, 1) = \phi_i(e_{P_i}) = \phi_i(\alpha_i(v_i)) = \alpha_{v_{i+1}}(T_i(v_i)) = \alpha_{v_{i+1}} \left( \sum_{v_i \to w} w \right).
\]

For \( g \in G \), there is an edge from \( v_i \) to \( g v_{i+1} \) if and only if \(  g P_{i+1} \subseteq P_i w_{S-\{r_{i+1}\}} P_{i+1} \), by lemma \ref{lem:geodesic-criterion}.  
Write the maximal length element  \( w_{S-\{r_{i+1}\}} \) in $W_{S-\{r_{i+1}\}}$ as \( w_{i+1} \). Then we have
\[
\sum_{v_i \to w} w = \sum_{gP_{i+1} \subseteq P_i w_{i+1} P_{i+1}} g v_{i+1}.
\]
Thus,
\[
D(v_i, v_{i+1}, 1) = \alpha_{v_{i+1}} \left( \sum_{v_i \to w} w \right) = \sum_{gP_{i+1} \subseteq P_i w_{i+1} P_{i+1}} g e_{P_{i+1}}=\frac{1}{|P_{i+1}|}\sum_{x\in P_i w_{i+1}P_{i+1}}x = a_{P_i w_{i+1} P_{i+1}}.
\]

\item[(3)] \textbf{Zeta function.}  
Consequently, the \( c \)-step operator is
\[
D(v_0,v_0,c)   = D(v_0,v_1,1)\,D(v_1,v_2,1)\cdots D(v_{c-1},v_0,1)   = a_{P_0 w_1 P_1}\,a_{P_1 w_2 P_2}\cdots a_{P_{c-1} w_0 P_0}.
\]
and the nonzero eigenvalues of \( X_2(\B)|_C \) are exactly the \( c \)-th roots of
the eigenvalues of \( D(v_0,v_0,c)_r \) acting on \( \C[G] e_{P_0} \).

The
associated zeta function is therefore
\[
Z_c\!\left(X_2(\B)|_C,u\right) 
   = \prod_{\lambda}
     \frac{1}{1-\lambda u^c},
\]
where \(\lambda\) ranges over eigenvalues of \( D(v_0,v_0,c)_r \) on \( \C[G]e_{P_0} \).
\end{enumerate}
\medskip
In the next subsection, we simplify \( D(v_0, v_0, c) \) using Weyl group length-additivity and Luo’s theorem on reduced products, collapsing the above product to a small number of double-coset operators.  
This, together with Springer’s theorem (as in \cite{brouwer2010, ShenAIM2024}), yields that the resulting eigenvalues are powers of \( q \), completing the proof of Theorem~\ref{Mainthm} componentwise.
\section{Hecke Algebra Reduction and Proof of the Main Theorem}
\label{sec:3}
In this section we complete the proof of the main theorem by reducing the
products of the relative destination elements (RDEs) in the Hecke algebra.
Using Luo’s decomposition of the longest Weyl element, we show that after a
finite number of steps the long product of edge operators collapses to a pair
of double–coset operators between opposite parabolic subgroups.
Their spectra are then determined via Springer’s theorem on central elements
of Hecke algebras, leading to explicit expressions for the eigenvalues and the
associated zeta factors.

\subsection{Collapsing the RDE product}\label{sec:computing-RDE}

Fix a type orbit \( C = \{t_0, \dots, t_{c-1}\} \) in the geodesic edge graph \( X_2(\mathcal{B}) \), where 
\[
t_i = (r_i, r_{i+1}), \qquad \nex(t_i) = t_{i+1} \quad (\text{indices mod } c),
\] 
see \S \ref{sec:partite-decomp}. Choose basepoints
\[
v_i = \bigl(P_{S-\{r_i, r_{i+1}\}},\, r_i \to r_{i+1}\bigr), 
\qquad 
P_i := \Stab_G(v_i) = P_{S-\{r_i, r_{i+1}\}}.
\]

We now compute the product
\[
D(v_0, v_0, c) := D(v_0, v_1, 1) \cdots D(v_{c-1}, v_0, 1),
\]
where \( D(v_i, v_{i+1}, 1) = a_{P_i w_{i+1} P_{i+1}} \in e_{P_i} \mathbb{C}G e_{P_{i+1}} \), as discussed in \S \ref{sec:transitive-RDE}. Here \( w_i = w_{S-\{r_i\}} \) is the longest element in \( W_{S-\{r_i\}} \).

The main result is that a suitable power of $D(v_0,v_0,c)$ reduces to a simple form
whose eigenvalues are computed in the next subsection.

\begin{prop}\label{Prop:Collapsing RDE}
There exists a positive integer \( m \) such that \( c \mid 2m \), and
\[
D(v_0, v_0, c)^{2m/c} = a_{P_0 w_S P_m}\, a_{P_0 w_S P_m},
\]
where \( P_m \) and \( P_0 \) are opposite standard parabolic subgroups.
\end{prop}

This proposition is a drastic simplification of the product of $2m$ individual RDEs. We prove this proposition at the end of this subsection, after stating the theorem and lemmas needed.

\paragraph{Luo’s decomposition and its consequences}
\begin{thm}[Luo's Decomposition]\label{thm:Luo-decomp}
Let \( (W, S) \) be the Weyl group associated with a reduced root system \( \Phi \). Let \( s_0, s_1 \in S \) be distinct and define 
\[
s_{n+1} = w_{S-\{s_n\}}\, s_{n-1}\, w_{S-\{s_n\}}, \qquad n \geq 1.
\]
Let \( w'_i \) be the longest element in \( W_{S-\{i,i+1\}} \), and \( w_i \) be the longest element in \( W_{S-\{i\}} \). Then there exists a unique integer \( m \) such that
\(
w'_0 w_S = w'_0 w_1 w'_1 w_2 \cdots w'_{m-1} w_m,
\)
and
\(
l(w'_0 w_S) = \sum_{k=1}^m l(w'_{k-1} w_k).
\)
\end{thm}
We will prove this theorem again in Section~\ref{sec:Luo-new}. We state two corollaries of this decomposition theorem.

\begin{cor}[Invariance of the number of factors]\label{cor:invariance-of-m}
Let $\{s_i\}_{i\in\Bbb Z}$ be the sequence from Luo’s construction, and set $w_i:=w_{S-\{s_i\}}$ and $w'_i:=w_{S-\{s_i,s_{i+1}\}}$.
For each starting index $k\ge0$, suppose Luo’s factorization holds with length $m_k$:
\begin{equation}\label{eq:Luo-k}
  (w'_k w_{k+1})(w'_{k+1}w_{k+2})\cdots (w'_{k+m_k-1}w_{k+m_k}) \;=\; w'_k\,w_S,
  \quad
  \sum_{j=1}^{m_k}\ell\!\bigl(w'_{k+j-1}w_{k+j}\bigr)=\ell(w'_k w_S).
\end{equation}
Then all $m_k$ are equal; i.e. $m_k\equiv m$ is independent of $k$.
\end{cor}

\begin{proof}
By equation \eqref{eq:Luo-k}, $\sum_{j=1}^{m_k}\ell\!\bigl(w'_{k+j-1}w_{k+j}\bigr)=\ell(w'_k w_S).$

Since $w_{i+1}$ is the longest element in $W_{S-\{s_{i+1}\}} \supset W_{S-\{s_i,s_{i+1}\}}$, 
we have
\[
   \ell(w'_i w_{i+1}) = \ell(w_{i+1}) - \ell(w'_i).
\]
Note also that $\ell(w'_k \, w_S)=\ell(w_S)-\ell(w'_k)$. Hence,
\begin{equation}\label{eq:lengthA}
   \sum_{i=k+1}^{k+m_k}\ell(w_i)\;-\;\sum_{i=k+1}^{k+m_k-1}\ell(w'_i)
   \;=\;\ell(w_S).
\end{equation}

Now apply the same reasoning with $k\mapsto k+1$:
\begin{equation}\label{eq:lengthB}
   \sum_{i=k+2}^{k+1+m_{k+1}}\ell(w_i)\;-\;\sum_{i=k+2}^{k+m_{k+1}}\ell(w'_i)
   \;=\;\ell(w_S).
\end{equation}

If $m_{k+1}<m_k$, then subtracting \eqref{eq:lengthB} from \eqref{eq:lengthA} yields
\begin{equation}\label{eq:diff}
   \ell(w_{k+1})-\ell(w'_{k+1})
   \;+\;\sum_{i=k+m_{k+1}+1}^{k+m_k}(\ell(w_i)
   \;-\;\ell(w'_{i-1}))
   \;=\;0.
\end{equation}

Moreover, for every $i$ we have $\ell(w_i)>\ell(w'_i)$ and $\ell(w_{i+1})>\ell(w'_i)$ because 
$W_{S-\{s_i,s_{i+1}\}}\subsetneq W_{S-\{s_i\}}$ and $W_{S-\{s_i,s_{i+1}\}}\subsetneq W_{S-\{s_{i+1}\}}$.  
Thus the left-hand side of \eqref{eq:diff} is strictly positive, a contradiction.

Hence $m_{k+1}\ge m_k$ for all $k$.  
Since the sequence $\{s_i\}$ is periodic with period $c$, the integer sequence $\{m_k\}$ is also $c$-periodic.  
But a weakly increasing, $c$-periodic sequence must be constant,
therefore, all $m_k$ are equal.
\end{proof}

\begin{cor}\label{cor:WS-conjugates-shift}
In the setting of Luo's theorem with the notation above, let $m$ be the common value from Corollary~\ref{cor:invariance-of-m}. Then for every $i$ one has
\[
   s_i^{\,w_S}=s_{i+m}.
\]
\end{cor}

\begin{proof}
From Luo's decomposition one has
\[
   w'_0 w_S=(w'_0w_1)(w'_1w_2)\cdots(w'_{m-1}w_m).
\]
Since each $w_j^2=1$, this gives
\[
   w_S=w_1(w'_1w_2)\cdots(w'_{m-1}w_m), 
   \qquad 
   w_1w_S=(w'_1w_2)\cdots(w'_{m-1}w_m).
\]

Next, by the invariance of $m$, we may shift the indices in Luo’s decomposition. 
Moreover $w'_{j-1}w_j=w_jw'_j$, because $w_j=w_{I-\{s_j\}}$ conjugates 
$I-\{s_j,s_{j-1}\}$ to $I-\{s_j,s_{j+1}\}$ and hence conjugates the corresponding 
longest elements $w'_{j-1}=w_{I-\{s_j,s_{j-1}\}}$ to 
$w'_j=w_{I-\{s_j,s_{j+1}\}}$.

Applying this relation with a shifted index gives
\[
   w_S=w_2(w'_2w_3)\cdots(w'_mw_{m+1})
      =(w'_1w_2)\cdots(w'_{m-1}w_m)w_{m+1}.
\]
Hence$w_1w_S=w_Sw_{m+1}.$

Thus $w_S$ conjugates the longest element of $W_{S\setminus\{s_1\}}$ to that of $W_{S\setminus\{s_{m+1}\}}$.  
Since conjugation by $w_S$ permutes simple reflections, this forces
\[
   w_S(S\setminus\{s_1\})w_S=S\setminus\{s_{m+1}\},
\]
so $s_1^{\,w_S}=s_{m+1}$.  
Shifting indices yields $s_i^{\,w_S}=s_{i+m}$ for all $i$.
\end{proof}

\paragraph{Concatenation lemma for double–coset operators}
We now prove a lemma that allows concatenation of products of elements of the form $a_{P u P'}$.

\begin{lemma}\label{concatenateA}
Let \( u_1, \dots, u_k \in W \), and \( I_0, I_1, \dots, I_k \subseteq S \) such that 
\[
I_{i-1} u_i = u_i I_i.
\]
Let \( W'_i = \langle I_i \rangle \), and \( P_i = B W'_i B \). Suppose that the \( u_i \) are minimal-length elements in \( W'_{i-1} u_i = u_i W'_i \). 
Suppose further that 
\[
l(u_1 \cdots u_k) = l(u_1) + \cdots + l(u_k).
\]
Then \( u_1 \cdots u_k \) is the minimal-length element in \( W_0 u_1 \cdots u_k \), and
\[
a_{P_0 u_1 P_1} \cdots a_{P_{k-1} u_k P_k} = a_{P_0 u_1 \cdots u_k P_k}.
\]
\end{lemma}

\begin{proof}
It suffices to prove the result for \( k=2 \); the general case follows by induction.

The statement that \( u_1 u_2 \) is the minimal-length element in \( W_0 u_1 u_2 \) follows from the deletion condition: 
Suppose \( u_1 u_2 \) is not minimal. Then there exists \( s \in I_0 \) such that 
\[
l(s u_1 u_2) < l(u_1 u_2).
\]
Cancellation cannot occur at \( u_1 \), since \( l(u_1 u_2) = l(u_1) + l(u_2) \) and \( u_1 \) is minimal in \( W_0 u_1 \). 
Thus \( s u_1 u_2 = u_1 u_2' \) for some \( u_2' \) of length \( l(u_2) - 1 \). Then \( s^{u_1} u_2 = u_2' \), but \( s^{u_1} \in I_0^{u_1} = I_1 \), a contradiction. 
Hence \( u_1 u_2 \) is of minimal length.

Now, to prove the product formula: 
\[
a_{P_0 u_1 P_1} a_{P_1 u_2 P_2} = a_{P_0 u_1 u_2 P_2}.
\]

Define a partial order \( \geq \) on \( \mathbb{C} G \) by \( a \geq b \) iff \( a-b = \sum_{g \in G} c_g g \) with \( c_g \geq 0 \) for all \( g \in G \). Then \( \mathbb{C} G_{\geq 0} \) is closed under addition and multiplication. 

First, we claim 
\[
a_{P_0 u_1 P_1} a_{P_1 u_2 P_2} \geq a_{P_0 u_1 u_2 P_2}.
\]
Indeed,
\[
a_{P_0 u_1 P_1} = \frac{1}{|P_1|} \sum_{x \in P_0 u_1 P_1} x \;\geq\; u_1 e_{P_1}.
\]
Thus,
\[
a_{P_0 u_1 P_1} a_{P_1 u_2 P_2} \geq u_1 e_{P_1} a_{P_1 u_2 P_2} = u_1 a_{P_1 u_2 P_2} \geq u_1 u_2 e_{P_2}.
\]
Then, for any \( g \in P_0 \),
\[
a_{P_0 u_1 P_1} a_{P_1 u_2 P_2} = g a_{P_0 u_1 P_1} a_{P_1 u_2 P_2} \geq g u_1 u_2 e_{P_2}.
\]
Hence,
\[
a_{P_0 u_1 u_2 P_2} 
= \sum_{g P_2 \subseteq P_0 u_1 u_2 P_2} g e_{P_2} 
= \sup_{g \in P_0} (g u_1 u_2 e_{P_2})
\;\leq\; a_{P_0 u_1 P_1} a_{P_1 u_2 P_2}.
\]

Secondly, by Lemma \ref{sizeofdoublecoset-unequal}, for some weight function $Q:W\to \mathbb{Z}$,
\[
|P_0 u_1 P_1| = Q(u_1)\,|P_1|, \quad 
|P_1 u_2 P_2| = Q(u_2)\,|P_2|, \quad 
|P_0 u_1 u_2 P_2| = Q(u_1 u_2)\,|P_2|.
\] 

Thus, the sum of coefficients of 
\( a_{P_0 u_1 P_1}, a_{P_1 u_2 P_2}, a_{P_0 u_1 u_2 P_2} \) 
are \( Q(u_1), Q(u_2), Q(u_1 u_2) \) respectively. 
Since $\ell(u_1u_2)=\ell(u_1)+\ell(u_2)$, multiplicativity (see the formula for $Q(w)$ below) gives 
\(Q(u_1u_2)=Q(u_1)Q(u_2)\). 
Therefore, the equality holds.
\end{proof}

Before formulating the size of double cosets in the unequal parameter case, 
we introduce the standard weight function. 
For each simple reflection $s\in S$, let 
\[
Q(s) := \#\{\text{chambers in $B(G)$ containing a fixed panel of type $s$}\},
\]
the \emph{thickness parameter} of type $s$. 
In algebraic terms,
\[Q(s)=\frac{|BsB|}{|B|},
\]
so $Q(s)$ is the Hecke parameter attached to $s$, and may vary across 
different conjugacy classes of simple reflections.  

For $w\in W$ define
\[
Q(w) \;:=\; \prod_{s\in S} Q(s)^{m_s(w)},
\]
where $m_s(w)$ is the number of occurrences of $s$ in any reduced expression of $w$. 
This is well-defined because the parameters $Q(s)$ are constant on conjugacy classes of simple reflections.  
Then
\[
|BwB| \;=\; Q(w)\,|B|,
\]
see e.g.\ \cite[Prop.~3.2]{iwahori-matsumoto1965}.

\begin{lemma}\label{sizeofdoublecoset-unequal}
Let \( I,J \subseteq S \) and \( w \in W \) be such that \( I^w = J \). 
Suppose \( w \) is the minimal-length element in \( W_I w = w W_J \). 
Let \( P_I = BW_I B \), \( P_J = BW_J B \). Then
\[
|P_I| = |P_J|, \qquad |P_I w P_J| = Q(w)\,|P_I|.
\]
\end{lemma}

\begin{proof}
Recall (see Lemma~8.2 in \cite{feit1964}) that 
\[
BW_I B w B = BW_I w B, 
\quad BwB W_I B = Bw W_I B.
\]
Hence
\[
\begin{aligned}
P_I w P_J 
&= BW_I B w B W_J B \\
&= BW_I w W_J B \\
&= BW_I w B.
\end{aligned}
\]
Since $w$ has minimal length in $W_I w$,  we have $\ell(uw)=\ell(u)+\ell(w)$ and $Q(uw)=Q(u)Q(w)$ for any $u\in W_I$. It follows that
\[
\sum_{x \in P_I w P_J} x 
= \sum_{u \in W_I} Q(uw)\,|B| 
= Q(w) \sum_{u \in W_I} Q(u)\,|B| 
= Q(w)\,|P_I|.
\]
Similarly, $P_I w P_J = Bw W_J B$ shows $|P_I w P_J| = Q(w)|P_J|$, so in particular $|P_I|=|P_J|$.
\end{proof}

\paragraph{Proof of Proposition~\ref{Prop:Collapsing RDE}}
We are given \( r_0, r_1 \), with \( r_{i+1} = w_i r_{i-1} w_i \). This defines \( \{r_i\}_{i \in \mathbb{Z}} \), and let \( c \) be the minimal positive period of this sequence. Applying Luo’s theorem \ref{thm:Luo-decomp} with \( s_i = r_i \), we obtain an integer \( m \) such that the decomposition and length condition hold. By Corollary \ref{cor:WS-conjugates-shift}, \( s_k^{w_S} = s_{k+m} \). Thus \( 2m \) is a period for \( \{s_k\} \), hence \( c \mid 2m \).

Next, write
\[
D(v_i, v_j, j-i) = a_{P_i w_{i+1} P_{i+1}} \cdots a_{P_{j-1} w_j P_j}.
\]
Then
\[\begin{aligned}
    D(v_0, v_0, c)^{2m/c} &= D(v_0, v_c, c)\, D(v_c, v_{2c}, c) \cdots D(v_{2m-c}, v_{2m}, c).\\
    &= D(v_0, v_{2m}, 2m) = D(v_0, v_m, m)\, D(v_m, v_{2m}, m).
\end{aligned}
\]

Now, let \( u_i = w'_{i-1} w_i \). Then \( u_i \) is the minimal-length element in
\[
W'_{i-1} w_i = W_{S-\{i-1,i\}} w_{S-\{i\}} = w_{S-\{i\}} W_{S-\{i,i+1\}} = w_i W'_i,
\]
and moreover, \( I_{i-1}=S-\{s_{i-1},s_{i}\} \) is conjugated to \( I_i=S-\{s_{i},s_{i+1}\}\) via $u_i$: $w'_{i-1}$ is the longest element in $W_{I_{i-1}}$ and fixes $I_{i-1}$, while $w_i=w_{S-\{s_i\}}$ conjugates  $I_{i-1}$ to $I_{i}$, and so 
\[
I_{i-1}^{u_i}=(I_{i-1}^{w'_{i-1}})^{w_i} =I_{i-1}^{w_i} =I_i.
\]
Applying Lemma \ref{concatenateA} to the \( u_i \), we obtain
\[
D(v_0, v_m, m) = a_{P_0 u_1 \cdots u_m P_m} =a_{P_0w_0' w_S P_m} = a_{P_0 w_S P_m}.
\]
Similarly,
\[
D(v_m, v_{2m}, m) = a_{P_m w_S P_0}.
\]
Thus
\[
D(v_0, v_0, c)^m = a_{P_0 w_S P_m}\, a_{P_m w_S P_0},
\]
By Corollary \ref{cor:WS-conjugates-shift} again, $I_m^{w_S}=I_0$, and so  \( P_m=BW_{I_m}B\) is opposite to \( P_0=BW_{I_0}B\).
\qed

\subsection{Eigenvalues and edge–zeta factors}\label{sec:3.2}

We now explain that the eigenvalues of the operator
\[
(a_{P_0 w_S P_m}\, a_{P_m w_S P_0})_r : \mathbb{C}G e_{P_0} \;\to\; \mathbb{C}G e_{P_0}
\]
are integer powers of \(q\), where \(w_S\) is the longest element in \(W\), and 
\[
P_0 = B W_I B, \qquad P_m = B W_{I^{w_S}} B.
\]

This fact is closely related to the theory of oppositeness graphs, cf.~\cite{brouwer2010}, and generalizes Proposition~7.0.1 of \cite{ShenAIM2024}. 
Both proofs, ours and the one in \cite{brouwer2010}, are ultimately based on Springer’s theorem on the action of central elements in Iwahori–Hecke algebras. 

\begin{thm}\label{thm:eigenvalues}
Let \( I \subseteq S \), let \( w \) be the longest element in \(W\), and put \( J := I^w \). 
Let \( P_I = B W_I B \) and \( P_J = B W_J B \). 
Then the eigenvalues of right multiplication
\[
(a_{P_I w P_J}\, a_{P_J w P_I})_r : \mathbb{C}G e_{P_I} \to \mathbb{C}G e_{P_I}
\]
are powers of \(q\). 

More precisely: let $\chi$ be an irreducible representation of $G$. 
Let $n_\chi$ be its multiplicity in $\C G e_{P_I}$, and $d_\chi = \dim(\chi)$. 
Define
\[
   f_\chi \;=\; \sum_{t \in \mathcal{R}} \Bigl(1 + \tfrac{\chi_1(t)}{\chi_1(1)}\Bigr),
\]
where $\mathcal{R}$ runs over all conjugacy classes of reflections in the Weyl group $W$, 
and $\chi_1$ denotes the corresponding character of $W$ associated with $\chi$. 
Then the eigenvalues of $(a_{P_I w P_J}\, a_{P_J w P_I})_r$ are $Q(w_I)^{-2}\cdot q^{\,f_\chi}$,
with multiplicity $d_\chi n_\chi$, as $\chi$ ranges over all irreducible representations 
of $G$ with $n_\chi \neq 0$.
\end{thm}

\begin{proof}
First, we reduce to the central element \( a_{B w B} \).  
By Lemma~\ref{sizeofdoublecoset-unequal}, 
\[
P_I w P_J = B W_I w B = P_I w B, \qquad P_J w P_I = B w P_I.
\]
Hence
\[
a_{P_I w P_J}\, a_{P_J w P_I} \;\sim\; e_{P_I}\, a_{B w B}^2 \, e_{P_I},
\]
where \( a \sim b \) means \( a = \lambda b \) for some scalar \( \lambda \neq 0 \). 

By Springer’s Theorem~9.2.2 in \cite{geck_pfeiffer2000}, the element \( a_{B w B}^2 \) lies in the center of the Hecke algebra \( e_B \mathbb{C}G e_B \). 
Since \( e_{P_I} \in e_B \mathbb{C}G e_B \), we have
\[
e_{P_I} a_{B w B}^2 e_{P_I} = e_{P_I} a_{B w B}^2.
\]
Moreover, \((e_{P_I})_r\) acts trivially on \( \mathbb{C}G e_{P_I} \). 
Thus the operator \((a_{P_I w P_J} a_{P_J w P_I})_r\) is equivalent to a scalar multiple of \((a_{B w B}^2)_r\) on \( \mathbb{C}G e_{P_I} \). 

The scalar is determined by total coefficients: 
the minimal element in \( W_I w \) is \(w_Iw\), so by Lemma \ref{sizeofdoublecoset-unequal},
\[
|P_I w P_J| = |P_J w P_I| =Q(w_Iw) |P_J|= \frac{Q(w)}{Q(w_I)} |P_J|=\frac{Q(w)}{Q(w_I)} |P_I|.
\]
Then the total coefficient of $a_{P_I w P_J}=\frac{1}{|P_J|}\sum_{x\in P_IwP_J} x$ is 
$\frac{Q(w)}{Q(w_I)}$, and the same holds for $a_{P_I w P_J}$.  In comparison, $|BwB|=Q(w)|B|$, and $a_{BwB}$ has total coefficient $Q(w)$.
Thus
\[
(a_{P_I w P_J} a_{P_J w P_I})_r = Q(w_I)^{-2} (a_{B w B}^2)_r.
\]

Second, we change sides (Lemma~6.1.1 of \cite{ShenAIM2024}):  
the eigenvalues of \((a_{B w B}^2)_r\) on \( \mathbb{C}G e_{P_I} \) coincide with those of \((a_{B w B}^2)_l\) on \( e_{P_I} \mathbb{C}G \).

Now decompose the left regular representation:
\[
\mathbb{C}G \;\simeq\; \bigoplus_\chi M_\chi^{d_\chi},
\]
where each \( M_\chi \) is an irreducible \(G\)-module. 
Then, as $e_{P_I}\C Ge_{P_I}$-modules,
\[
e_{P_I} \mathbb{C}G \;\simeq\; \bigoplus_\chi (e_{P_I} M_\chi)^{d_\chi}.
\]
By definition, 
\[
\dim(e_{P_I} M_\chi) =\dim \Hom_G(\C Ge_{P_I}, M_{\chi}) =\langle \chi, (1_{P_I})^G \rangle = n_\chi,
\]
the multiplicity of \(M_\chi\) in \(\mathbb{C}G e_{P_I}\). 

Finally, by Springer’s theorem again, the central element \( a_{B w B}^2\in e_B\C Ge_B \) acts on \( e_B M_\chi \), and on $e_P M_{\chi}\subseteq e_BM_{\chi}$, as scalar multiplication by \( q^{f_\chi} \). 
Hence the eigenvalues of \((a_{P_I w P_J} a_{P_J w P_I})_r\) are
\[
Q(w_I)^{-2}  \cdot q^{f_\chi}
\]
with multiplicity \( d_\chi n_\chi \).
\end{proof}

\subsection{Summary: proof and refinement of the main theorem}\label{subsec:main-conclusion}

We now deduce the main theorem \ref{Mainthm} on the spectrum along a type orbit 
and its zeta factor.

Fix a type orbit $C$ of length $c$, with associated $c$-step operator $T_C$ on $\C Ge_{P_0}$.  
By Section~\ref{sec:transitive-RDE}, the zeta factor attached to $C$ is determined by the spectrum of $T_C$, and
\[
T_C \;=\; D(v_0,v_c,c)_r
  \;=\; \bigl(a_{P_0 w_1 P_1}\dots a_{P_{c-1} w_c P_c}\bigr)_r\;.
\]
This is the same as the spectrum of the left multiplication operator $T_C' := D(v_0,v_0,c)_l$ acting on $e_{P_I}\C G$.

We have the following identifications:

\begin{enumerate}
\item By Corollary~\ref{cor:invariance-of-m} and \S\ref{sec:computing-RDE}, there exists \(m>0\) with \(c\mid 2m\) such that
\[
D(v_0,v_0,c)^{\,2m/c} \;=\; a_{P_0 w_S P_m}\,a_{P_m w_S P_0}.
\]
Hence
\[
\Spec\!\bigl((T_C)^{2m/c}\bigr) \;=\; \Spec\!\Bigl(\bigl(a_{P_0 w_S P_m}\,a_{P_m w_S P_0}\bigr)_r\Bigr).
\]

\item Applying Theorem~\ref{thm:eigenvalues} with \(I=S\setminus\{s_i,s_{i+1}\}\) (for any \((i,i+1)\in C\)) and \(J=I^{w_S}\), we obtain
\[
\Spec\!\Bigl(\bigl(a_{P_0 w_S P_m}\,a_{P_m w_S P_0}\bigr)_r\Bigr)
\;=\;
\left\{\,Q(w_I)^{-2}  \cdot q^{f_\chi}\ \text{with mult.\ } d_\chi n_\chi : \chi \in \Irr(G),\,n_\chi\neq 0\,\right\}.
\]
Thus each eigenvalue \(\lambda\) of \(T_C\) is a \((2m/c)\)-th root of a power of \(q\).

\item More precisely, recall that
\[
e_{P_I}\,\C G \;\simeq\; \bigoplus_\chi \bigl(e_{P_I} M_\chi\bigr)^{d_\chi}.
\]
The operator $T_C'$ acts on $e_{P_I}M_\chi$ with eigenvalues
\[
   \zeta \cdot \Bigl(Q(w_I)^{-2}\,q^{\,f_\chi}\Bigr)^{1/d},\qquad \zeta^d=1,
\]
where $d=2m/c$. We may therefore define
\[
   m_{C,\chi}(\zeta)
   \;=\;\text{the multiplicity of }
   \zeta \cdot \bigl(Q(w_I)^{-2}\,q^{\,f_\chi}\bigr)^{1/d}
   \text{ as an eigenvalue of $T_C$ on } e_{P_I}M_\chi.
\]
In particular,
\[
   \sum_{\zeta^d=1} m_{C,\chi}(\zeta) \;=\; n_\chi.
\]

\item The zeta factor attached to \(C\) is then
\[
Z_c\!\left(X_2(\mathcal B)|_C,u\right)\;=\;\prod_{\lambda\in \Spec(T_C)}
\frac{1}{1-\lambda\,u^{\,c}}=\;
\prod_{\chi:\,n_\chi\ne 0}\;\prod_{\zeta^d=1}
\frac{1}{\bigl(1-\zeta\,\left(Q(w_I)^{-2}\,q^{\,f_\chi}\right)^{1/d}\,u^{\,c}\bigr)^{\,m_{C,\chi}(\zeta)d_{\chi}}}.
\]
\end{enumerate}

\medskip
We have now proved the Main Theorem \ref{Mainthm}, in fact in the following refined form:

\begin{thm}[Main Theorem, refined form]\label{Mainthmv2}
Let \(G\) be a finite group of Lie type over \(\mathbb{F}_q\) with spherical building \(\mathcal{B}\). 
Let \(C\) be a type orbit of length \(c\) in \(X_2(\mathcal B)\), and let \(m\) be the integer associated with $C$ afforded by Luo’s decomposition (so \(c\mid 2m\)). Let $d=2m/c$ 
Fix any \((i,i+1)\in C\) and set \(I=S\setminus\{s_i,s_{i+1}\}\). Then:

\begin{enumerate}
\item For every eigenvalue \(\lambda\) of the $c$–step operator \(T_C\), the power \(\lambda^{2m/c}\) is an integer power of \(q\):
\[
\Spec\!\bigl((T_C)^{2m/c}\bigr) \;=\left\{\,Q(w_I)^{-2}  \cdot q^{f_\chi}\ \text{with mult.\ } d_\chi n_\chi : \chi \in \Irr(G),\,n_\chi\neq 0\,\right\}.
\]

\item Let $m_{C,\chi}(\zeta)$ be the multiplicity of the eigenvalue 
\(\zeta\cdot(Q(w_I)^{-2}q^{f_\chi})^{1/d}\) 
of $T_C$ acting on $e_{P_I}M_\chi$, where $\zeta^d=1$. Then

\[
   \sum_{\zeta^d=1} m_{C,\chi}(\zeta)=n_\chi
\]
and the zeta factor attached to \(C\) is
\[
Z_c\!\left(X_2(\mathcal B)|_C,u\right)=\;
\prod_{\chi:\,n_\chi\ne 0}\;\prod_{\zeta^d=1}
\frac{1}{\bigl(1-\zeta\,\left(Q(w_I)^{-2}\,q^{\,f_\chi}\right)^{1/d}\,u^{\,c}\bigr)^{\,m_{C,\chi}(\zeta)d_{\chi}}}.
\]
\end{enumerate}

Consequently, the global edge zeta function factors as
\[
Z\!\left(X_1(\mathcal B),u\right)=Z_c\!\left(X_2(\mathcal B),u\right)\;=\;\prod_{C} Z_c\!\left(X_2(\mathcal B)|_C,u\right),
\]
and all reciprocal poles of \(Z\) are roots of powers of \(q\).
\end{thm}

Once again, we record the notations:
\begin{itemize}
  \item $Q(w_I)$: $|Bw_IB/B|$.
  \item $\chi\in \Irr(G)$: an irreducible representation of $G$.
  \item $d_\chi=\dim(\chi)$: the degree of $\chi$.
  \item $n_\chi$: the multiplicity of $\chi$ in the permutation module $\C G e_{P_I}$.
  \item $f_\chi$: the reflection statistic
  \[
     f_\chi=\sum_{t\in \mathcal{R}}
     \Bigl(1+\frac{\chi_1(t)}{\chi_1(1)}\Bigr),
  \]
  with $\mathcal{R}$ the set of reflections in $W$ and $\chi_1$ the corresponding Weyl–group character.
\end{itemize}
\paragraph{Techniques to find $m_{C,\chi}$}  
The parameters $Q(w_I), \chi, d_\chi, n_\chi,$ and $f_\chi$ arise naturally from the representations of the Hecke algebra $\mathcal{H}=e_B\C G e_B$ and the group algebra $\C W$. To compute the multiplicities $m_{C,\chi}(\zeta)$, we deform $\mathcal{H}$ to $\C W$ by letting $q\to 1$.  

Recall that $P_0=P_I$ and $W_0=W_I$ are the parabolic subgroups of $G$ and $W$ corresponding to $I$, and that $m_{C,\chi}(\zeta)$ is the multiplicity of the eigenvalue  
\[
\zeta \cdot \bigl(Q(w_I)^{-2}\,q^{\,f_\chi}\bigr)^{1/d}
\]  
for the operator $T_C'$, where $T_C'=D(v_0,v_0,c)_\ell$ is left multiplication by $D(v_0,v_0,c)=a_{P_0uP_0}$ with $u=(w_0'w_1)\cdots(w_{c-1}'w_0)\in W$.  

By Tits’ Deformation Principle (\cite[\S8.1]{ShenAIM2024}, \cite[\S7.4,8.1]{geck_pfeiffer2000}), under $q\to 1$ we have $\mathcal{H}\to \C W$, $P_0 \to W_0$, and $D(v_0,v_0,c) \to a_{W_0uW_0} = \tfrac{1}{|W_0|}\sum_{x\in W_0uW_0}x$. The module $e_{P_I}M_\chi$ deforms to $e_{W_I}M_{1,\chi}$, where $M_{1,\chi}$ is the $\C W$-module corresponding to $\chi$. Thus $m_{C,\chi}(\zeta)$ equals the multiplicity of $\zeta$ as an eigenvalue of $a_{W_0uW_0}$ on $e_{W_0}M_{1,\chi}$.  

In particular,  
\[
\sum_\zeta m_{C,\chi}(\zeta)\,\zeta \;=\; \Tr\!\bigl((a_{W_0uW_0})_\ell,\,e_{W_0}M_{1,\chi}\bigr) \;=\; \Tr\!\bigl((a_{W_0uW_0})_\ell,M_{1,\chi}\bigr) \;=\; \chi_1(a_{W_0uW_0}),
\]  
where $\chi_1$ is the character of $M_{1,\chi}$. The second equality holds since the image of $(a_{W_0uW_0})_\ell$ on $M_{1,\chi}$ already lies in $e_{W_0}M_{1,\chi}$. More generally, for every integer $k$,  
\begin{equation}\label{eq:mult-identity}
\chi_1(a_{W_0uW_0}^k) \;=\; \sum_\zeta m_{C,\chi}(\zeta)\,\zeta^k,
\end{equation}
which provides a practical tool to find the multiplicities $m_{C,\chi}(\zeta)$.

\subsection{Luo’s decomposition theorem}\label{sec:Luo-new}
The following subsection concerns a recent result of Luo \cite[\S2]{luo2022} on
the decomposition of the longest element $w_S$ of a Weyl group.  
Luo originally formulated the theorem in the context of his work on the Casselman–Shahidi
conjecture, where it serves as a key combinatorial input for analyzing the singularity of
intertwining operators in exceptional types. In our setting the same theorem provides the
structural simplification needed to collapse long products of relative destination elements
to just a few double coset operators involving opposite parabolics.

Our presentation differs slightly from Luo’s.
Since Luo’s paper is recent and not yet published in journal form, and because we relied
heavily on this decomposition in our argument, we reproduce his proof in our notations. 

\medskip

Throughout, let $(W,S)$ be the Weyl group attached to a reduced root system $(\Phi,\Delta)$.  
For $w\in W$, set
\[
R(w):=w^{-1}(\Phi^-)\cap \Phi^+,
\]
the set of positive roots sent to negative by $w$. Then
\[\ell(w)=|R(w)|.\] 
For $\alpha\in\Phi$, write $\alpha>0$ if $\alpha\in\Phi^+$ and $\alpha<0$ if $\alpha\in\Phi^-$.  
For $\Delta'\subseteq \Delta$, denote by $(W_{\Delta'},S_{\Delta'})$ and $\Phi_{\Delta'}$ the corresponding parabolic subgroup and root subsystem.

\medskip

We begin with two standard lemmas on root sets and lengths.

\begin{lemma}\label{Lemma3.3.1}
Let $w_1,w_2\in W$.  
If $R(w_2)\subseteq R(w_1w_2)$, then $\ell(w_1w_2)=\ell(w_1)+\ell(w_2)$.  
If $R(w_2)\cap R(w_1w_2)=\varnothing$, then $\ell(w_1w_2)=\ell(w_1)-\ell(w_2)$.
\end{lemma}

\begin{proof}
If $R(w_2)\subseteq R(w_1w_2)$, any reduced expression of $w_2$ sits as a suffix of a reduced expression of $w_1w_2$, hence $\ell(w_1w_2)=\ell(w_1)+\ell(w_2)$.

If $R(w_2)\cap R(w_1w_2)=\varnothing$, then $R(w_2^{-1})\subseteq R(w_1)$: for $\alpha\in R(w_2^{-1})$ one has $w_2^{-1}\alpha<0<\alpha$, so $-w_2^{-1}\alpha\in R(w_2)$ and thus $-w_2^{-1}\alpha\notin R(w_1w_2)$; hence $-w_1\alpha>0$ and $\alpha\in R(w_1)$.  
Therefore $R(w_2)\subseteq R(w_1w_2w_2^{-1})$, and  $\ell(w_1)=\ell(w_1w_2)+\ell(w_2^{-1})=\ell(w_1w_2)+\ell(w_2^{-1})$.
\end{proof}

\begin{lemma}\label{Lemma3.3.2}
Let $\Delta'\subseteq \Delta$, let $\alpha\in\Delta'$ be simple, and let $w\in W$ with $w(\Delta'-\{\alpha\})\subseteq \Delta$.  
If $w\alpha<0$ (resp.\ $w\alpha>0$), then $w\beta<0$ (resp.\ $w\beta>0$) for every positive root $\beta\in \Phi_{\Delta'}-\Phi_{\Delta'-\{\alpha\}}$.
\end{lemma}

\begin{proof}
Put $\Delta'':=w(\Delta'-\{\alpha\})\subseteq \Phi^+$.  
If $w\alpha>0$, then $w\Delta'\subseteq \Phi^+$ and hence $w\beta>0$ for all positive $\beta\in \Phi_{\Delta'}$.

If $w\alpha<0$, write $w\alpha=\sum_{\gamma\in\Delta} c_\gamma\gamma$; since $w\alpha$ is linearly independent from $\Delta''$, there exists $\gamma_0\in \Delta\setminus\Delta''$ with $c_{\gamma_0}\neq 0$, and necessarily $c_{\gamma_0}<0$.  
For $\beta=\sum_{\delta\in\Delta'} c'_\delta \delta\in \Phi_{\Delta'}^+$ with $c'_\alpha>0$, the coefficient of $\gamma_0$ in $w\beta=\sum_{\delta\in\Delta'} c'_\delta\,w\delta$ equals $c_{\gamma_0}c'_\alpha<0$, whence $w\beta<0$.
\end{proof}

\medskip

We now state Luo's theorem in our notation.  
Given $s_0,s_1\in S$, define the bi-infinite sequence $(s_i)_{i\in\Bbb Z}$ by
\[
s_{n+1} \;=\; w_{S-\{s_n\}}\,s_{n-1}\,w_{S-\{s_n\}}
\]
For each $i\in\Bbb Z$ set
\[
w_i:=w_{S-\{s_i\}},\qquad
w'_i:=w_{S-\{s_i,s_{i+1}\}},
\]
the longest elements of the corresponding standard parabolic subgroups.

\begin{thm}[Luo]\label{LuoThm2}
There exists a unique $m\ge1$ such that
\[
w'_0\,w_S \;=\; (w'_0 w_1)(w'_1 w_2)\cdots (w'_{m-1} w_m),
\]
and the lengths add:
\[
\ell(w'_0 w_S) \;=\; \sum_{j=1}^{m}\, \ell(w'_{j-1} w_j).
\]
\end{thm}

\begin{proof}\textbf{Step 1: Set-up.}
Let $\alpha_i\in\Delta$ be the simple root corresponding to $s_i$, and set
\[
\Delta_i:=\Delta\setminus\{\alpha_i\},\qquad
\Delta_{i,i+1}:=\Delta\setminus\{\alpha_i,\alpha_{i+1}\}.
\]
Then $w'_i$ (resp.\ $w_{i+1}$) is the longest element of $W_{\Delta_{i,i+1}}$ (resp.\ $W_{\Delta_{i+1}}$), so
\begin{equation}\label{S0}
    w'_i(\Delta_{i,i+1})=-\Delta_{i,i+1},\qquad
w_{i+1}(\Delta_{i+1})=-\Delta_{i+1}, \qquad
(w'_i w_{i+1})\,\Delta_{i+1,i+2}=\Delta_{i,i+1}.
\end{equation}

Set $u_0:=w_S w'_0$ and inductively $u_{n+1}:=u_n\,(w'_n w_{n+1})$.
 Thus \[u_n=w_S\,w_1(w'_1 w_2)\cdots (w'_{n-1} w_n).\]  Our goal is to find some $m$ so that $u_m=e$.

\textbf{Step 2: Length formula.}
In this step, we deduce the length update
\begin{equation}\label{S3}
\ell(u_{n+1}) \;=\;
\begin{cases}
\ell(u_n)-\ell(w'_n w_{n+1}), & \text{if } u_n\alpha_n<0,\\[2pt]
\ell(u_n)+\ell(w'_n w_{n+1}), & \text{if } u_n\alpha_n>0.
\end{cases}
\end{equation}

Put
\(
v_n:=w_{n+1}w'_n=(w'_n w_{n+1})^{-1}.
\)
Then $u_{n+1}\,v_n \;=\; u_n$, and
\begin{equation}\label{S2}
R(v_n)=R(w_{n+1}w'_n)=\Phi_{\Delta_{n+1}}^+\setminus \Phi_{\Delta_{n,n+1}}^+.
\end{equation}

By (\ref{S0}), $u_{n+1}\Delta_{n+1,n+2}:=u_n\,(w'_n w_{n+1})\Delta_{n+1,n+2}=u_n\Delta_{n,n+1}$. Iterating gives
\begin{equation}\label{S1}
u_n\,\Delta_{n,n+1}
= u_0\,\Delta_{0,1}
= w_S\bigl(w'_0\Delta_{0,1}\bigr)
= w_S(-\Delta_{0,1})
\subset \Delta.
\end{equation}
Then, we may apply Lemma~\ref{Lemma3.3.2} with $w=u_n$, $\Delta'=\Delta_{n+1}$, $\alpha=\alpha_n$, and conclude that the sign of $u_n$ on the cone
$\Phi_{\Delta_{n+1}}^+\setminus \Phi_{\Delta_{n,n+1}}^+$ is controlled by the sign of $u_n\alpha_n$.

Combining with (\ref{S2}) gives the relation on reflected roots: \[
\begin{cases}
u_n\alpha_n>0 \ \Rightarrow\ R(v_n)\cap R(u_n)=\varnothing,\\
u_n\alpha_n<0 \ \Rightarrow\ R(v_n)\subseteq R(u_n).
\end{cases}\]
This, together with $u_n=u_{n+1}v_n$, implies (\ref{S3}) by Lemma~\ref{Lemma3.3.1}.

\textbf{Step 3: Existence of $m$ with $u_m=e$.}
We have $u_0\alpha_0=w_S w'_0\alpha_0<0$.  
As long as $u_n\alpha_n<0$, the length strictly decreases by (\ref{S3}).  
But $\ell(u_n)$ is bounded below. Hence there exists a minimal $m\ge1$ with $u_m\alpha_m>0$.

By minimality, $u_{m-1}\alpha_{m-1}<0$. Using $\alpha_{m+1}=-w_m\alpha_{m-1}$,
\[
u_m\alpha_{m+1}=u_{m-1}w'_{m-1}(-\alpha_{m-1}).
\]
Since $w'_{m-1}\alpha_{m-1}>0$ and $w'_{m-1}\alpha_{m-1}\in \Phi_{\Delta_m}^+\!\setminus\!\Phi_{\Delta_{m-1,m}}^+$, Lemma~\ref{Lemma3.3.2} (with $w=u_{m-1}$, $\Delta'=\Delta_m$, $\alpha=\alpha_{m-1}$) gives $u_{m-1}w'_{m-1}\alpha_{m-1}<0$, hence $u_m\alpha_{m+1}>0$.
Together with $u_m\alpha_m>0$ and (S1) (which implies $u_m\Delta_{m,m+1}\subset\Delta$), we conclude $u_m$ sends all simple roots to positive roots; therefore $u_m=e$.

Summing (\ref{S3}) for $n=0,\dots,m-1$ and using $u_m=e$ yields
\[
\ell(w'_0 w_S) \;=\; \sum_{j=1}^{m}\ell(w'_{j-1}w_j).
\]
Taking inverses of $u_m=e$ gives
\[
w'_0\,w_S \;=\; (w'_0 w_1)(w'_1 w_2)\cdots (w'_{m-1} w_m).
\]
Uniqueness of $m$ follows from the length additivity.
\end{proof}
\begin{rem}

In terms of our notations, Luo’s original statement (\cite{luo2022}) is
\[
   w_S w_1 \;=\; (w_m w'_{m-1})\cdots (w_2 w'_1),
\]
with length additivity. 
This is equivalent to
\(
w_1w_S  = (w'_1 w_2)\cdots(w'_{m-1}w_m)
\),
which is in turn equivalent (by multiplying $(w'_0 w_1)$ on the left) to the form
\[
   w'_0 w_S \;=\; (w'_0 w_1)(w'_1 w_2)\cdots (w'_{m-1} w_m).
\]
used above.  We adopt the latter form because it aligns more directly with the orbit structure of the geodesic edge graph and with our RDE computations, and because $2m$ is a period for the evolution of types.  
The values of $m$ for each irreducible type of $\Delta$ are listed in next section.
\end{rem}

\section{Examples: Explicit Zeta Factors for Finite Buildings}\label{sec:4}
This section illustrates the general spectral and zeta–factor formulas developed in
Section~\ref{sec:3.2} by working out concrete cases for spherical buildings of
finite classical groups.  
We focus on the split families of type~$\mathbf{A}$ and type~$\mathbf{C}$, for which all Hecke
parameters equal~$q$ and the zeta factors admit closed expressions.
The type~$\mathbf{A}$ case recalls the results of~\cite{ShenAIM2024} in the present
framework, while the type~$\mathbf{C}$ case provides new explicit formulas for
$G=\Sp_{2n}(\F_q)$.  

Section~\ref{sec:4.1} reviews the structure of finite buildings and the
split versus non–split distinction.  
Sections~\ref{subsec:typeA}–\ref{subsec:typeC} apply the three–step
spectral method to types~$\mathbf{A}$ and~$\mathbf{C}$, and
Section~\ref{sec:4.4} records the type–orbit cycles and the half–cycle
integers~$m$ needed for all Coxeter types.
\subsection{Finite buildings: split and non–split cases}\label{sec:4.1}
By Tits’ classification of buildings, every finite spherical building of irreducible type and rank at least three arises from an algebraic group of Lie type over a finite field \cite[\S11.4]{Tits1974}.  
A complete list of such families is given in \cite[\S11.2]{Tits1974}.  
Our discussion applies uniformly to all of them.

Let $G$ be a connected reductive algebraic group defined over $\F_q$, and let $\B(G)$ be the spherical building associated with $G$, as in Section~\ref{sec:1}.  
In particular, groups such as $\GL_n(\F_q)$, though not absolutely simple, yield the same building as $\PGL_n(\F_q)$ and are therefore included in our framework.  

The Coxeter system $(W,S)$ underlying $G$ determines the type of apartments of $\B(G)$.  
Recall from §\ref{sec:computing-RDE} that for each $s\in S$, the \emph{thickness parameter}
\[
  Q(s)\;=\;\#\{\text{chambers adjacent to a given chamber via a panel of type $s$}\}
  \;=\; |BsB/B|
\]
records the local branching number at panels of type $s$.  
For $w\in W$, we set $Q(w)=\prod_{s\in\mathrm{red}(w)}Q(s)$, independent of the choice of reduced expression.  

\paragraph{Split versus non–split}
The collection $\{Q(s)\}_{s\in S}$ detects whether $G$ is split over $\F_q$.  
If $G$ is split (e.g.\ $G=\GL_n,\Sp_{2n},\SO_{2n}$), then all $Q(s)$ equal the field size $q$, so the Iwahori–Hecke algebra has equal parameters.  
In this case,
\[
Q(w)=q^{\ell(w)}, \qquad Q(w_I)=q^{\ell(w_I)}.
\]

For twisted or non–split groups, the parameters $Q(s)$ may vary across different conjugacy classes of simple reflections.  
An important infinite family is provided by the unitary groups ${}^2A_n(q)=\mathrm{PGU}_{n+1}(q)$ ($n\ge 2$), defined by a non–degenerate hermitian form.  
Their building is the flag complex of the associated polar space, of type $\mathbf{C}_{\lfloor (n+1)/2 \rfloor}$ \cite[\S11.2.2]{Tits1974}.  

Our algorithm applies uniformly in both split and non–split settings.  

\paragraph{Three–step summary}
We now summarize the procedure for computing the edge eigenvalues and edge zeta functions of $\B(G)$.  

\emph{Step 1. Orbit decomposition.}  
Decompose the set $S\times S-\Delta(S)$ of directed edge types into orbits $C$, and write $c=|C|$.  
This gives a decomposition of $X_2(B)$ into type–orbit components $X_2(B)\mid_C$.  
For each orbit, choose a starting edge $v_0$ with stabilizer $P_0=\Stab(v_0)$.  
Form the $c$–step operator $T_C$ on $\C[G]e_{P_0}$, given by right multiplication by $D(v_0,v_0,c)=a_{P_0uP_0}$, with $u=(w_0'w_1)\cdots(w_{c-1}'w_0)\in W$ (see the end of §\ref{sec:transitive-RDE}).

Then
\[
Z_c\!\bigl(X_2(B),u\bigr)=\prod_{C} Z_C(u),
\qquad
Z_C(u)=\prod_{\lambda\in\Spec(T_C)} \frac{1}{1-\lambda\,u^{\,c}}.
\]

\emph{Step 2. Collapse to $D(v_0,v_0,2m)$.}  
Luo’s decomposition produces an integer $m$ (explicitly determined in §\ref{sec:4.4}) with $c\mid 2m$, such that
\[
(T_C)^{2m/c}=D(v_0,v_0,2m)_r.
\]
Its spectrum is
\[
\Spec\!\bigl(D(v_0,v_0,2m)\bigr)
   = \biguplus_{\chi\in\Irr(G)}
      \bigl\{\,q^{\,f_\chi}\,Q(w_I)^{-2}\bigr\}^{\times d_\chi n_\chi},
\]
with $f_\chi,d_\chi,n_\chi$ as in §\ref{subsec:main-conclusion}.  

\emph{Step 3. Root selection.}  
The eigenvalues of $T_C$ are the $(2m/c)$–th roots of these collapsed values.  
Their multiplicities are determined by equation~\eqref{eq:mult-identity} (see §\ref{subsec:main-conclusion}).  
Consequently, by Main Theorem~\ref{Mainthmv2},
\[
Z_c\!\left(X_2(\mathcal B)\!\mid_C,u\right)
   = \prod_{\chi:\,n_\chi\ne 0}\;\prod_{\zeta^d=1}
     \frac{1}{\bigl(1-\zeta\,(Q(w_I)^{-2}q^{\,f_\chi})^{1/d}\,u^c\bigr)^{m_{C,\chi}(\zeta)\,d_\chi}}.
\]

In the next subsections we first recall the type~$\mathbf{A}$ computation, and then work out the new formulas for $G=\Sp_{2n}(\F_q)$.

\subsection{Type $\mathbf{A}$: recalling the formulas}\label{subsec:typeA}
We summarize the zeta functions for buildings of type $\mathbf{A}_n$ from \cite{ShenAIM2024} using the present three–step method; we refer to that paper for the details. 
By Tits' classification \cite[\S11.2]{Tits1974}, all buildings of type $\mathbf{A}_n$ with $n\ge 3$ are associated with \(G=\GL_{n+1}(\F_q)\).

Let \(G=\GL_n(\F_q)\), whose spherical building is of type \(A_{n-1}\). It admits the concrete description:
\begin{itemize}
  \item Vertices of type \(i\) correspond to \(i\)-dimensional subspaces of \(\F_q^n\),
        equivalently to cosets of maximal parabolics \(P_{S-\{s_i\}}\).
  \item Edges correspond to nested subspaces \(U\subset V\), equivalently to cosets \(P_{S-\{s_i,s_j\}}\).
  \item Chambers are maximal flags \(0\subset V_1\subset \cdots \subset V_{n-1}\subset \F_q^n\).
\end{itemize}
The Weyl group is \(W=S_n\), generated by the adjacent transpositions \(s_i=(i\ i{+}1)\) for \(1\le i\le n-1\).
Geometrically, the building is the flag complex of \(\F_q^n\), and every thickness parameter equals \(q\).

\medskip\noindent
We now recover the closed formulas for zeta functions in type \(A\) via the three–step scheme.

\medskip\noindent
\textbf{Step 1. Cycles and the parameter \(2m\).}
Write \(n=i+j+k\) with \(i\ge j\) and \(i\ge k\).
A type orbit \(C\) is determined by the first two dimensions \((i,i{+}j)\).
Starting from the edge \(i\to i{+}j\) and iterating the next–type map yields the orbit
\[
C=\{(i,i{+}j),\,(i{+}j,j),\,(j,j{+}k),\,(j{+}k,k),\,(k,k{+}i),\,(k{+}i,i)\}.
\]
Generically \(c=|C|=6\); in the balanced case \(i=j=k\) one has \(c=2\).
In type \(A\), Luo’s decomposition always gives \(2m=6\).
Fix \(v_0=\langle e_1,\dots,e_i\rangle \subset \langle e_1,\dots,e_{i+j}\rangle\) and let \(P_0=\Stab(v_0)\).
With \(T_C=D(v_0,v_0,c)_r\), 
\[
Z_C(u)\;=\;\prod_{\lambda\in\Spec(T_C)} \frac{1}{1-\lambda\,u^{c}}.
\]

\medskip\noindent
\textbf{Step 2. Collapsed spectrum.}
Let \(P_0=P_\mu\) with row–type \(\mu=(\mu_1,\mu_2,\mu_3)=(i,j,k)\) in descending order.
Then
\[
(T_C)^{2m/c}\;\sim\;D(v_0,v_0,2m),\qquad
\Spec\!\bigl(D(v_0,v_0,2m)\bigr)
=\biguplus_{\chi\in\Irr(G)}
\bigl\{\,q^{\,f_\chi}\,Q(w_I)^{-2}\bigr\}^{\times d_\chi n_\chi}.
\]

The irreducible constituents of $\C G e_{P_\mu}$ are denoted $\chi_\lambda$, 
parametrized by partitions $\lambda\vdash n$ that dominate $\mu$ ($\lambda\succeq\mu$).  

Furthermore:

\begin{itemize}
  \item \(n_\lambda=\) multiplicity of \(\chi_\lambda\) in \(\C Ge_{P_\mu}\), and equals the Kostka number \(K_{\lambda,\mu}\).
  \item 
  \(Q(w_I)=q^{\ell(w_I)}\), with \(\ell(w_I)=\mathrm{wt}_r(\mu):=\sum_i \binom{\mu_i}{2}\) the row weight of $\mu$.
  \item \(f_\lambda=\displaystyle \frac{n(n-1)}{2}+\kappa_{\lambda}\), where \(\kappa_{\lambda}=\sum_i \binom{\lambda_i}{2}-\sum_j \binom{\lambda'_j}{2}\), with \(\lambda'\) the conjugate partition of $\lambda$.
  \item \(d_\lambda=\displaystyle q^{\,n(\lambda)}\frac{\prod_{i=1}^n (q^i-1)}{\prod_{b\in\lambda}(q^{h(b)}-1)}\) (the \(q\)-hook formula), and $n(\lambda)=\sum_{i\geq 1} (i-1)\lambda_i$.
\end{itemize}
Hence
\[
\Spec\!\bigl(D(v_0,v_0,2m)\bigr)
=\biguplus_{\lambda\vdash n}\Bigl\{\,q^{\,f_\lambda-2\,\mathrm{wt}_r(\mu)}\Bigr\}^{\times d_\lambda K_{\lambda,\mu}}.
\]

\medskip\noindent
\textbf{Step 3. Root selection.}
The eigenvalues of \(T_C\) are the \((2m/c)\)-th roots of the collapsed values above.
In type \(A\), \(2m=6\) (See for example \S\ref{sec:4.4} below).
\begin{itemize}
  \item If \(c=6\) (generic), then \(2m/c=1\), so no further root extraction is needed.
  \item If \(c=2\) (balanced), then \(2m/c=3\): we need to take cubic roots of the eigenvalues $q^{\,f_\lambda-2\,\mathrm{wt}_r(\mu)}$.
        Multiplicities are as in Table~8 of \cite[\S8.3]{ShenAIM2024}:
        the \(K_{\lambda,\mu}\) eigenvalues split as evenly as possible among \(1,\omega,\omega^2\) (\(\omega^3=1\)).
        Thus
        \[m(1)+m(\omega)+m(\omega^2)=K_{\lambda,\mu},\qquad m(\omega)=m(\omega^2),
        \]
        and \(m(1)\) differs from \(m(\omega)\) by at most \(1\), with  \(m(1)-m(\omega)\equiv K_{\lambda,\mu}\bmod 3\).
\end{itemize}

Therefore we recover the following closed formulas in type \(A\) \cite[\S7.2, \S8.4]{ShenAIM2024}.

\begin{thm}[Type $\mathbf{A}$, generic 6–partite components]\label{thm:A-generic}
Let \(C\) be a generic type–orbit (so \(c=6\)).
Then
\[
  Z_c\bigl(X_2(B)|_C,u\bigr)
  \;=\;
  \prod_{\lambda\vdash n}
  \frac{1}{\bigl(1 - q^{\,f_\lambda - 2\,\mathrm{wt}_r(\mu)}\,u^{6}\bigr)^{d_\lambda K_{\lambda,\mu}}}.
\]
Here \(d_\lambda\) is given by the \(q\)–hook length formula, \(K_{\lambda,\mu}\) is the Kostka number, and \(f_\lambda\) is as above.
\end{thm}

\begin{thm}[Type $\mathbf{A}$, balanced 2–partite components]\label{thm:A-balanced}
Let \(C\) be a balanced type–orbit (so \(c=2\)).
Then
\[
  \frac{1}{Z_c\!\bigl(X_2(B)|_C,u\bigr)}
  \;=\;
  \prod_{\lambda \vdash n}\;
  \prod_{j=0}^2
  \Bigl(1 - \omega^j\,q^{(f_\lambda-2\,\mathrm{wt}_r(\mu))/3}\,u^{2}\Bigr)^{d_\lambda\,(K_{\lambda,\mu}+\epsilon_{\lambda,j})/3},
\]
where \(\omega=e^{2\pi i/3}\), and the correcting integers satisfy
\(\epsilon_{\lambda,0}\in\{0,\pm2\}\), \(\epsilon_{\lambda,1}=\epsilon_{\lambda,2}\in\{0,\pm1\}\),
chosen so that the exponents sum to \(d_\lambda K_{\lambda,\mu}\) and match the balanced split described above.
\end{thm}

\subsection{Type $\mathbf{C}$: explicit formulas}\label{subsec:typeC}

In this subsection we consider the spherical building attached to the symplectic group 
$G=\Sp_{2n}(\F_q)$, which is split of type $\mathbf{C}_n$. 
We recall the geometric description of its building, and present the explicit formulas for the edge zeta functions by computing explicitly its invariants.

\subsubsection{Geometry and zeta factors set-up}

\paragraph{Group and BN--pair}
Let $V=\F_q^{2n}$ with a nondegenerate alternating form $\langle\ ,\ \rangle$ and
\[
  G=\Sp_{2n}(\F_q)=\{g\in\GL(V)\mid \langle gv,gw\rangle=\langle v,w\rangle\}.
\]
Fix a symplectic basis $\mathcal{E}=\{e_1,\dots,e_n,f_n,\dots,f_1\}$ with $\langle e_i,f_j\rangle=\delta_{ij}$, and set
$V_i=\langle e_1,\dots,e_i\rangle$.
The standard maximal torus and Borel are
\[
  T=\{\diag(t_1,\dots,t_n,t_n^{-1},\dots,t_1^{-1})\},\qquad
  B=\Stab_G(0\subset V_1\subset\cdots\subset V_n),
\]
and $N=\Stab_G(T)$ is the group of monomial matrices in $G$ with respect to \ $\mathcal{E}$.
Then $(B,N)$ is a BN--pair for $G$, with $|B|=q^{n^2}(q-1)^n$ and
$|G|=q^{n^2}\prod_{i=1}^n(q^{2i}-1)$.

\paragraph{Weyl group}
The Weyl group is the hyperoctahedral group
\[
  W=W(C_n)=N/T=\{\,\sigma:\{\pm1,\dots,\pm n\}\to\{\pm1,\dots,\pm n\}\mid \sigma(-i)=-\sigma(i)\,\},
\]
acting by signed permutations on the pairs $\{\langle e_i\rangle,\langle f_i\rangle\}$, where we identify
$\langle e_i\rangle$ with $+i$ and $\langle f_i\rangle$ with $-i$.
Choose simple reflections
\[
  s_i=(i\,\,i{+}1)(-i\,\, -(i{+}1))\ (1\le i<n),\qquad s_n=(n\,\, -n).
\]
For each $s\in S$ one has $|BsB|=q\,|B|$ since in the split case $Q(s)=q$.

\paragraph{Building and oppositeness}
The spherical building $\B(G)$ has
\begin{itemize}
  \item vertices of type $i$ parametrizing $i$--dimensional totally isotropic subspaces (equivalently $G/P_i$ with $P_i=\Stab_G(V_i)$);
  \item edges given by inclusions $U\subset U'$ of isotropic subspaces with $\dim U=i<j=\dim U'$;
  \item chambers are the maximal isotropic flags $0\subset V'_1\subset\cdots\subset V'_n$.
  \item Oppositeness for vertices: This is type-preserving: $U,U'$ of dimension $i$ are opposite iff the form induces a nondegenerate pairing
$U\times U'\to\F_q$, equivalently $g^{-1}h\in P_{S-\{i\}}\,w_0\,P_{S-\{i\}}$ for $U=gV_i$, $U'=hV_i$.
\end{itemize}

\emph{Remark.} Let $V_{n+1}=\langle e_1,...,e_n,f_n\rangle$.
Replacing $V_i$ inside $V_{i-1}\subset(\cdot)\subset V_{i+1}$ shows each $i$--panel meets exactly $q{+}1$ chambers.  This shows also that $|Bs_iB|=q|B|$.

\paragraph{Geodesic paths}
Recall from \S\ref{sec:1} that three vertices
\((aP_{S-\{r\}},\,bP_{S-\{s\}},\,cP_{S-\{t\}})\) form a geodesic path if the first and third are opposite in 
the link of the middle one. 

In type $\mathbf{C}$, this condition translates into an alternating pattern of inclusions and complements of isotropic subspaces.  
Such paths can be visualized as
\[
V_1 \subset V_2 \supset V_3 \subset V_4 \supset \cdots,
\]
with the condition that for each integer $k$, $V_{2k-1}\oplus V_{2k+1} = V_{2k}$ and that the symplectic form 
$\langle \ , \ \rangle$ induces a nondegenerate pairing on the  quotients 
$V_{2k}/V_{2k+1}$ and $V_{2k+2}/V_{2k+1}$. 

For example, the following is a geodesic cycle of length 8.
\begin{align*}
   &\langle e_1,\dots,e_i\rangle 
      \;\subseteq\; \langle e_1,\dots,e_j\rangle
      \;\supseteq\; \langle e_{i+1},\dots,e_j\rangle
      \;\subseteq\; \langle f_1,\dots,f_i,e_{i+1},\dots,e_j\rangle \\
   &\supseteq\; \langle f_1,\dots,f_i\rangle 
      \;\subseteq\; \langle f_1,\dots,f_j\rangle
      \;\supseteq\; \langle f_{i+1},\dots,f_j\rangle
      \;\subseteq\; \langle e_1,\dots,e_i,f_{i+1},\dots,f_j\rangle\\ &
      \;\supseteq\; \langle e_1,\dots,e_i\rangle \subseteq .....
\end{align*}

\paragraph{Type orbits}

Label the $\mathbf{C}_n$ Dynkin diagram as $1-2-\cdots-(n-1)=n$.  
Fix integers $i,j\ge 1$ with $i+j\le n$.  
A geodesic cycle  beginning with isotropic subspaces of dimensions $(i,\,i+j)$ has type orbit 
\[
C=\{(i,i{+}j),\,(i{+}j,j),\,(j,i{+}j),\,(i{+}j,i)\},
\]
so $C$ has length $c=4$ when $j\neq i$, and length $c=2$ when $j=i$.

By Luo’s decomposition (see also \S\ref{sec:4.4}),  $2m=8$ in type $\mathbf{C}$, hence $d=2m/c\in\{2,4\}$.

\paragraph{Zeta functions and spectra}
Fix an orbit $C$ starting at $(i,i+j)$ and let $I=S\setminus\{s_i,s_{i+j}\}$, $P_0=P_I$.
By the Three–Step Summary of \S\ref{sec:4.1}:

Let $T_C$ be the right multiplication by $D(v_0,v_0,c)=a_{P_0uP_0}$ on $\C[G]e_{P_0}$, with $u=(w_0'w_1)\cdots(w_{c-1}'w_0)\in W$. Then
\[
(T_C)^{\,2m/c}=D(v_0,v_0,2m)_r,\qquad
\Spec D(v_0,v_0,2m)=\biguplus_{\chi\in\Irr(G)}
\bigl\{\,q^{\,f_\chi}\,Q(w_I)^{-2}\bigr\}^{\times d_\chi n_\chi}.
\]

Therefore the eigenvalues of $T_C$ are the $d$th roots of these collapsed values, distributed according to the root–selection multiplicities $m_{C,\chi}(\zeta)$ of \S\ref{sec:4.1}:
\[
Z_c\!\bigl(X_2(\B)\!\mid_C,u\bigr)
=\prod_{\chi:\,n_\chi\ne0}\ \prod_{\zeta^{\,d}=1}
\frac{1}{\Bigl(1-\zeta\,(Q(w_I)^{-2}q^{\,f_\chi})^{1/d}\,u^{\,c}\Bigr)^{m_{C,\chi}(\zeta)\,d_\chi}}.
\]
Here $c\in\{2,4\}$, $d=2m/c\in\{4,2\}$ with $2m=8$ in type $\mathbf{C}$; the invariants $f_\chi,d_\chi,n_\chi,m_{C,\chi}(\zeta)$ are as in \S\ref{subsec:main-conclusion}, and their explicit values will be obtained from the representation theory of  $W_n$ and  $G$, which we introduce subsequently.

\begin{rem}[Other buildings of type $\mathbf{C}$]
Beyond the split symplectic case $G=\Sp_{2n}(\F_q)$, the same spherical buildings of type $\mathbf{C}$ arise from, among others, two other classical families \cite[\S11.2]{Tits1974}.

(i) For odd orthogonal groups $G=\mathrm{O}_{2n+1}(\F_q)$ (type $\mathbf{B}_n$), the flag complex of the polar space of a nondegenerate quadratic form is a building of type $\mathbf{C}_n$; in particular, the Weyl group is again the hyperoctahedral group $W(C_n)$.

(ii) For unitary groups $G=\mathrm{PGU}_{n+1}(\F_{q^2})$ (twisted type ${}^{2}\!A_n$), the flag complex of the hermitian polar space is a building of type $\mathbf{C}_{\lfloor (n+1)/2\rfloor}$.

Along a fixed type–orbit $C$ in the geodesic edge graph, the invariants
\(\ell(w_I),\,f_\chi,\,n_\chi,\) and \(m_{C,\chi}(\zeta)\) depend only on the Weyl group \(W(C_n)\) and on the chosen orbit \(C\); only the degree \(d_\chi\) depends on the ambient group representation.
For the natural correspondence between labels \((\lambda,\mu)\vdash n\) on \(W(C_n)\) and the representations of $G=\Sp_{2n}(\F_q)$ or \(\mathrm{O}_{2n+1}(\F_q)\), the associated representations for \(\Sp_{2n}(\F_q)\) and \(\mathrm{O}_{2n+1}(\F_q)\) have the same dimensions \(d_\chi\) \cite[\S13.8]{carter1985}.
Consequently, the edge–geodesic zeta factors for \(\B(\Sp_{2n}(\F_q))\) and \(\B(\mathrm{O}_{2n+1}(\F_q))\) coincide, although the two buildings are not isomorphic when \(n\ge3\).
\end{rem}

\subsubsection{Spectral inputs for type $\mathbf{C}$}\label{subsec:typeC-spectral}

\paragraph{(1) Representations of $W_n$}
Let $W_n=W(C_n)=C_2^{\,n}\rtimes S_n$ with $C_2^{\,n}=\langle\tau_1,\dots,\tau_n\rangle$ acting by sign changes and $S_n$ permuting coordinates.
Its irreducible representations are indexed by \emph{bipartitions} $(\lambda,\mu)\vdash n$, i.e. pairs of partitions with $|\lambda|+|\mu|=n$. Write $n_+=|\lambda|$, $n_-=|\mu|$, and define the linear character $\rho_{n_+,n_-}$ of $C_2^{\,n}$ by

\[
\rho_{n_+,n_-}(\tau_i)=
\begin{cases}
+1,&1\le i\le n_+,\\
-1,&n_+< i\le n.
\end{cases}
\]
Its stabilizer in $S_n$ is $S_{n_+}\times S_{n_-}$, so it extends to $C_2^{\,n}\rtimes(S_{n_+}\times S_{n_-})$ by acting trivially on $S_{n_+}\times S_{n_-}$.

Let $S^\lambda, S^\mu$ denote the Specht modules of the symmetric group $S_{n_+}, S_{n_-}$, so their characters are $\chi_{\lambda,1}$ and $ \chi_{\mu,1}$ in \S\ref{subsec:typeA}. 
Inflate $S^\lambda\boxtimes S^\mu$ to $C_2^{\,n}\rtimes(S_{n_+}\times S_{n_-})$ (trivial on $C_2^{\,n}$) and set
\[
\widetilde S^{\lambda,\mu}=\rho_{n_+,n_-}\otimes (S^\lambda\boxtimes S^\mu),\qquad
S^{\lambda,\mu}=\Ind_{\,C_2^{\,n}\rtimes(S_{n_+}\times S_{n_-})}^{W_n}\widetilde S^{\lambda,\mu}.
\]
By Clifford theory, $S^{\lambda,\mu}$ is irreducible and every irreducible arises this way. Let $\chi^{\lambda,\mu}=\chi^{\lambda,\mu}_1$ be its character.

\paragraph{(2) Deformation from $\C W_n$ to $\mathcal H_q$ and $G$}
Let $e_B=|B|^{-1}\sum_{b\in B}b$ and $\mathcal H_q=e_B\,\C G\,e_B$. Then $\mathcal H_q$ is a Hecke algebra with basis $\{T_w : w\in W\}$ given by
\[
T_w=a_{BwB} \;:=\; \tfrac{1}{|B|}\sum_{x\in BsB} x,
\]
It is generated by $\{T_s : s\in S\}$ subject to the braid and quadratic relations $T_s^2=(q-1)T_s+q$. We compare the representations:

\smallskip
\begin{enumerate}[label=(\alph*)]
\item \emph{$\Irr(\mathcal H_q)$ vs.\ $\Irr(W_n)$}
The family $\{\mathcal H_q\}_{q\neq 0}$ is a flat deformation of $\C W_n$: for $q=1$ one has $\mathcal H_1= \C W_n$. 
By Tits’ deformation theorem \cite[§8.1.7]{geck_pfeiffer2000}, there is a correspondence between irreducible representations
\[
   \Irr(\mathcal H_q)\;\longleftrightarrow\;\Irr(W_n),
\]
preserving dimensions and inner products. 

\emph{Remark.} There exists a noncanonical algebra isomorphism $\mathcal H_q\simeq \C W_n$, realizing this deformation explicitly.

\smallskip
\noindent
\item \emph{$\Irr(G)$ vs.\ $\Irr(\mathcal H_q)$}\;
Note that $\mathcal H_q=e_B\C Ge_B$ is a Hecke algebra for $\C G$. The irreducible representations of $G$ contained in $\C Ge_B$ corresponds to  $\Irr(\mathcal H_q)$. More precisely:

If $M$ is a simple $\C G$–module contained in $\C G e_B$, then $e_BM$ is a simple $\mathcal H_q$–module, and every simple $\mathcal H_q$–module arises this way \cite[II, §11D]{curtis1981}.
On modules and characters, the correspondence is
\[
   \Irr(\C G)_{\subseteq \C Ge_B}\;\longleftrightarrow\;\Irr(\mathcal H_q): \qquad M\longleftrightarrow e_BM, \quad \chi \;\longleftrightarrow\; \chi|_{\mathcal H_q}.
\]
\end{enumerate}

\noindent Therefore,
\(\Irr(\C G)_{\subseteq \C Ge_B}\;\longleftrightarrow\; \Irr(\mathcal H_q)\;\longleftrightarrow\;\Irr(W_n)\).
We denote by $\chi^{\lambda,\mu}_q \in\Irr(\C G)$ the character corresponding to $\chi^{\lambda,\mu}_1\in \Irr(W_n)$.

This identification extends $\Z$--linearly and preserves parabolic induction \cite[\S9.1.9]{geck_pfeiffer2000}:  $1_{P_I}^G=\C Ge_{P_I} \leftrightarrow 1_{W_I}^{W_n}$. 

\paragraph{(3) Induction and permutation characters}

\smallskip
\begin{enumerate}[label=(\alph*)]
\item \emph{Induction and Kostka products.}  
For $S_n$, induction satisfies
\[
\Ind_{S_{n_1}\times S_{n_2}}^{S_{n_1+n_2}}
\bigl(\chi^{\lambda}\boxtimes \chi^{\mu}\bigr)
=\sum_{\nu} c^{\nu}_{\lambda,\mu}\,\chi^{\nu}, 
\quad 
\Ind_{S_{\alpha_1}\times\cdots\times S_{\alpha_r}}^{S_n}\mathbf{1}
=\sum_{\lambda\vdash n} K_{\lambda,(\alpha_1,\dots,\alpha_r)}\,\chi^\lambda,
\]
with Littlewood–Richardson coefficients $c^\nu_{\lambda,\mu}$ and Kostka numbers $K_{\lambda,\mu}$.  
Since $\mathbf{1}_{S_i}=\chi^{(i)}$, one has 
$K_{\lambda,(m_1,\dots,m_r)}=c^\lambda_{(m_1),\dots,(m_r)}$ for one–row parts.  

For $W_n$, Lemma~6.1.3 of \cite{geck_pfeiffer2000} gives, for $(\lambda_1,\mu_1)\vdash n_1$, $(\lambda_2,\mu_2)\vdash n_2$,
\[
\Ind_{W_{n_1}\times W_{n_2}}^{W_{n_1+n_2}}
\bigl(\chi^{(\lambda_1,\mu_1)}\boxtimes \chi^{(\lambda_2,\mu_2)}\bigr)
=\sum_{(\nu_1,\nu_2)} 
c^{\nu_1}_{\lambda_1,\lambda_2}\,
c^{\nu_2}_{\mu_1,\mu_2}\,
\chi^{(\nu_1,\nu_2)},
\]
so the $+$ and $-$ parts carry LR coefficients independently. Iterating and using that
$K_{\lambda,(m_1,\dots,m_r)}=c^\lambda_{(m_1),\dots,(m_r)}$ yields
\[
\Ind_{\prod_{t=1}^r W_{i_t+j_t}}^{W_{\sum_t(i_t+j_t)}}
\ \bigboxtimes_{t=1}^r \chi^{(i_t),(j_t)}
=\sum_{\lambda,\mu}
K_{\lambda,(i_1,\dots,i_r)}\;
K_{\mu,(j_1,\dots,j_r)}\;\chi^{(\lambda,\mu)}.
\]
A further useful identity (Lemma~6.1.4 of \cite{geck_pfeiffer2000}) is
\(
1_{S_n}^{W_n}=\sum_{i=0}^n \chi^{((i),(n-i))}.
\)

\item \emph{Decomposition of parabolic modules.}  
For $W_I=S_i\times S_j\times W_c$ with $i+j+c=n$,
\begin{align*}
1_{W_I}^{W_n}
&= \Ind_{S_a\times S_b\times W_c}^{W_n}
   \bigl(1_{S_a}\boxtimes 1_{S_b}\boxtimes 1_{W_c}\bigr) = \Ind_{W_a\times W_b\times W_c}^{W_n}
   \Bigl(\Ind_{S_a}^{W_a}\!1\ \boxtimes\ \Ind_{S_b}^{W_b}\!1\ \boxtimes\ 1_{W_c}\Bigr) \\
&= \Ind_{W_i\times W_j\times W_c}^{W_n}
   \Bigl(\;\bigoplus_{a=0}^{i}\chi^{((a),(i-a))}
          \ \boxtimes\ \bigoplus_{b=0}^{j}\chi^{((b),(j-b))}
          \ \boxtimes\ \chi^{((c),\varnothing)}\Bigr) \\
&\cong \sum_{\lambda,\mu}
     K_{\lambda,(a,b,c)}\,
     K_{\mu,(i-a,j-b)}\,
     \chi^{(\lambda,\mu)}.
\end{align*}
Thus the multiplicity of $\chi^{(\lambda,\mu)}$ in $1_{W_I}^{W_n}$ is
\[
n_{(\lambda,\mu)}=\sum_{a=0}^{i}\sum_{b=0}^{j}
K_{\lambda,(a,b,c)}\,K_{\mu,(i-a,j-b)},
\]
where $|\lambda|=a+b+c$, $|\mu|=i+j-a-b$. By the correspondence of (2), the same multiplicity holds for $\chi^{\lambda,\mu}_q$ in $\C Ge_{P_I}$.
\end{enumerate}

\paragraph{(4) Special values: degrees and special elements}
For an irreducible representation $\chi^{\lambda,\mu}$ of $W_n$ (or its deformation
$\chi^{\lambda,\mu}_q$ of $G$), we require several explicit evaluations. 
Let 
\[
   \widetilde S^{\lambda,\mu}=\rho_{n_+,n_-}\otimes(S^\lambda\boxtimes S^\mu)
\]
be the $H:=C_2^{\,n}\rtimes(S_{n_+}\times S_{n_-})$–module from part (a), with representation 
$\rho_{\widetilde S}$. The irreducible $W_n$–module $S^{\lambda,\mu}$ is obtained as 
\(\Ind_{H}^{W_n}\widetilde S^{\lambda,\mu}\), with representation $\rho_{S}$.

Now let \(x=\sum_{g\in W_n} c_g g \in Z(\C W_n)\), and denote by 
\(x'=\sum_{g\in H} c_g g \in \C H\) its restriction (projection) to $\C H$. Then $x'\in Z(\C H)$, and
by Schur’s lemma, $x$ acts on $S^{\lambda,\mu}$ by some scalar $c$, while $x'$ acts on 
$\widetilde S^{\lambda,\mu}$ by a scalar $c'$. Since 
\(S^{\lambda,\mu} = \C W_n \otimes_{\C H} \widetilde S^{\lambda,\mu}\), for any 
$m \in \widetilde S^{\lambda,\mu}$ we have
\[
c\,(1\otimes m)
= x(1\otimes m) 
= \sum_{g\in W_n} c_g\,(g\otimes m) 
= 1\otimes (x'm)\;+\;\text{(terms in other copies $g\otimes \widetilde S^{\lambda,\mu}$)}.
\]
But \(1\otimes (x'm)=c'\,(1\otimes m)\), hence $c=c'$. 
Therefore $x$ acts on $S^{\lambda,\mu}$ by the same scalar as $x'$ acts on 
$\widetilde S^{\lambda,\mu}$. We will use this scalar equality in (b) and (c) below.

\begin{enumerate}[label=(\alph*)]
\item \emph{Degrees.} 
The Weyl group representation degree is
\(
   \chi^{\lambda,\mu}_1(1)=\binom{n}{|\mu|}\,d_{\lambda,1}\,d_{\mu,1},
\)
where $d_{\nu,1}=\dim S^\nu$ is the dimension of the symmetric group module $S^\nu$.  
The corresponding $G$–degree is the \emph{generic degree polynomial}
\(
   d_{\lambda,\mu}(q)=\chi^{\lambda,\mu}_q(1).
\)

For bipartitions $(\lambda,\mu)\vdash n$ these polynomials are given by the \emph{hook–cohook formula} \cite[§13.8]{carter1985}.  
To describe it, one forms the \emph{defect–1 symbol} $S=(X,Y)$ of $(\lambda,\mu)$ (\cite[§11.4.2]{carter1985}):
\[
X=\{\lambda_i-i+k : 1\leq i\leq k\},\qquad
Y=\{\mu_j-j+(k-1):1\leq j\leq k-1\},
\]
for any integer $k\geq \max{(\ell(\lambda),\ell(\mu)+1)}$. 
A \emph{hook} is a pair $(b,c)$ with $0\leq b<c$ such that $c\in X$ and $b\notin X$, or $c\in Y$ and $b\notin Y$; a \emph{cohook} is a pair $(b,c)$ with $0\leq b<c$ such that $c\in X$ and $b\notin Y$, or $c\in Y$ and $b\notin X$.  
Let
\[
a(S)=\sum_{\{b,c\}\subset X\sqcup Y}\min(b,c)
      - \sum_{i\ge1}\binom{|X|+|Y|-2i}{2},\qquad
b(S)=\Bigl\lfloor \tfrac{|X|+|Y|-1}{2}\Bigr\rfloor-|X\cap Y|,
\]
with $b(S)=0$ if $X=Y$. The index $\{b,c\}\subset X\sqcup Y$ is over all the $\binom{2k-1}{2}$ many size 2 subsets of $X\sqcup Y$.

Then the formula in \cite[§13.8]{carter1985} simplifies to
\[
d_{\lambda,\mu}(q)=
\frac{q^{a(S)}\prod_{i=1}^n(q^{2i}-1)}
{2^{\,b(S)}
 \prod_{(b,c)\in \mathrm{hooks}(S)} (q^{c-b}-1)\;
 \prod_{(b,c)\in \mathrm{cohooks}(S)} (q^{c-b}+1)}.
\]

\emph{Example.}  
For $(\lambda,\mu)=((2,2),\varnothing)$ with $n=4$, take $k=2$. Then 
\(X=\{3,2\}\), \(Y=\{0\}\); the hooks are $(0,3),(1,3),(0,2),(1,2)$, the cohooks are $(1,3),(2,3),(1,2)$. Thus $a(S)=2-0$, $b(S)=1$, and
\[
d_{(2,2),\varnothing}(q)=
\frac{q^2(q^2-1)(q^4-1)(q^6-1)(q^8-1)}
{2\,(q-1)(q^2-1)^2(q^3-1)(q+1)^2(q^2+1)}
=\tfrac{1}{2}q^2(1-q+q^2)(1+q^2+q^4+q^6).
\]
At $q=1$ this gives $d(1)=2=\dim S^{(2,2)}$.

\item \emph{Sum of reflections.} 
Let \(z_{\mathrm{short}}=\sum_i \tau_i\) and 
\(z_{\mathrm{long}}=\sum_{i<j}\big((ij)+(ij)\tau_i\tau_j\big)\) be the sums of reflections along short and long roots. Consider the representation $\rho=\rho_{\widetilde S}$.

\emph{Action of $z_{\mathrm{short}}$.}  
We have $z_{\mathrm{short}}\in Z(\C W_n)\cap \C H$, and
\[
\rho_{n_+,n_-}(\tau_i)=
\begin{cases}
  1, & 1\le i\le n_+,\\
 -1, & n_+< i\le n,
\end{cases}
\qquad 
\tau_i \text{ acts trivially on } S^\lambda\boxtimes S^\mu.
\]
Hence 
\(\rho(z_{\mathrm{short}})=n_+-n_-=|\lambda|-|\mu|\).  
Since $z_{\mathrm{short}}$ is central in $\C W_n$, it acts on $S^{\lambda,\mu}$ also by this scalar.

\emph{Action of $z_{\mathrm{long}}$.} 
Write
\(
  z_{\mathrm{long}}
  \;=\;
  \sum_{1\le i<j\le n}\Big((ij)+(ij)\tau_i\tau_j\Big)\;\in Z(\C W_n).
\)
Its restriction (projection) to $\C[H]$ is obtained by keeping only those terms
with permutation part in $S_{n_+}\times S_{n_-}$, i.e.\ with $i,j$ in the same
block. Thus
\[
  z'_{\mathrm{long}}
  \;=\;
  \sum_{1\le i<j\le n_+}\Big((ij)+(ij)\tau_i\tau_j\Big)
  \;+\;
  \sum_{n_+< i<j\le n}\Big((ij)+(ij)\tau_i\tau_j\Big)
  \;\in\;\C[H].
\]
By the induction–restriction scalar equality above, $z_{\mathrm{long}}$ acts on
$S^{\lambda,\mu}$ by the same scalar as $z'_{\mathrm{long}}$ acts on
$\widetilde S^{\lambda,\mu}$. Since 
\(
\rho_{n_+,n_-}(1+\tau_i\tau_j)=2
\)
for $i,j$ within the same block, it follows that
\[
  \rho_{\widetilde S}(z'_{\mathrm{long}})
  \;=\;
  \rho_{S^\lambda}\boxtimes \rho_{S^\mu}\!\left(
    2\sum_{1\le i<j\le n_+}(ij)
    \;+\;
    2\sum_{n_+< i<j\le n}(ij)
  \right),
\]
In $S_m$, the class sum $\sum_{1\leq i< j\leq m} (ij)$ of transpositions acts on $S^\nu$ by the scalar \(\kappa(\nu)=\sum_i \binom{\nu_i}{2}-\sum_j \binom{\nu'_j}{2}\) (see also \S\ref{subsec:typeA}). Therefore \(z'_{\mathrm{long}}\) acts on $S^\lambda\boxtimes S^\mu$ (and hence on $\widetilde S^{\lambda,\mu}$) by \(2\kappa(\lambda)+2\kappa(\mu)\). This is the same scalar for $z_{\mathrm{long}}$ acting on $S^{\lambda,\mu}$. In summary:
\[
   \frac{\chi^{\lambda,\mu}(z_{\mathrm{short}})}{\chi^{\lambda,\mu}(1)}=|\lambda|-|\mu|,
   \qquad
   \frac{\chi^{\lambda,\mu}(z_{\mathrm{long}})}{\chi^{\lambda,\mu}(1)}
   =2(\kappa(\lambda)+\kappa(\mu)).
\]

  \item \emph{Longest element.}
The longest element is $w_0=\tau_1\cdots\tau_n$, which equals $-\Id$ in the standard matrix realization of $W_n$. Being central, $w_0$ acts on each irreducible by a scalar. On $\widetilde S^{\lambda,\mu}=\rho_{n_+,n_-}\otimes(S^\lambda\boxtimes S^\mu)$ we have 
\(\rho_{n_+,n_-}(w_0)=(-1)^{|\mu|}\), while $w_0$ acts trivially on $S^\lambda\boxtimes S^\mu$. Hence the scalar for $w_0$ on both $\widetilde S^{\lambda,\mu}$ and $S^{\lambda,\mu}$ is $(-1)^{|\mu|}$. In particular,
\[
   \chi^{\lambda,\mu}(w_0e_{W_I})
   =(-1)^{|\mu|}\,\chi^{\lambda,\mu}(e_{W_I})
   =(-1)^{|\mu|}\,n_{(\lambda,\mu)}.
\]

\item \emph{Special element $u$.}
When $j=i$,  the type orbit $C$ has length $2$. The two base points are 
\[
v_0=(P_0,\,s_i\to s_{2i}),\qquad v_1=(P_1,\,s_{2i}\to s_i).
\]
Then $w_0:=w_{S\setminus\{s_i\}}$, $w_1:=w_{S\setminus\{s_{2i}\}}$, and $w_0'=w_1'=w_{S\setminus\{s_i,s_{2i}\}}$ are the longest elements in the corresponding parabolics, and the two–step operator is
\[
   D(v_0,v_0,2)=a_{P_0 w_1 P_1}\,a_{P_1 w_0 P_0}=a_{P_0 u P_0},\qquad
   u=w_0' w_1 w_1' w_0.
\]
In the standard realization for $W_n$ one finds explicitly
\[
  u(l)=
  \begin{cases}
    l+i, & 1\le l\le i, \\[4pt]
    -(2i+1-l), & i+1\le l\le 2i, \\[4pt]
    l, & 2i+1\le l\le n.
  \end{cases}
\]

\medskip
More generally, write $n=i+i+k$ and split the coordinates into
\[
I_1=\{1,\dots,i\},\qquad I_2=\{i+1,\dots,2i\},\qquad K=\{2i+1,\dots,2i+k\}.
\]
Then $ue_{W_I}$ is the average of all signed block–bijections sending $I_1\to I_2$, $I_2\to -I_1$, and $K\to\pm K$, altogether $(i!)^2 2^k k!$ elements.

We compute the values of $\chi^{\lambda,\mu}(ue_{W_I})$ for those $\lambda,\mu$ with positive $n_{(\lambda,\mu)}$. By part (3) above, such $\lambda$ has at most three parts and $\mu$ at most two parts.

\emph{Block factorization} Let $|\mu|=n_-=2j$ (if $n_-$ is odd the character value vanishes) and $|\lambda|=n-n_-$. Group the minus coordinates into two blocks $J_3,J_4$ of size $j$, and the plus coordinates into two blocks $J_1,J_2$ of size $i-j$ together with $K_1$ of size $k$. Let $u_2$ swap $J_3\leftrightarrow J_4$ and $u_1$ swap $J_1\leftrightarrow J_2$ while fixing $K_1$ pointwise. Denote by $e_{J_r}$ the averaging idempotent of $S_{J_r}$. Then a direct, but somewhat lengthy, theoretical computation shows that (a detailed derivation and generalization will appear in a future work)
\[
\chi^{\lambda,\mu}(ue_{W_I})=(-1)^{j}\;\chi^{\lambda}\!\bigl(u_1\,e_{J_1}e_{J_2}e_{K_1}\bigr)\cdot
\chi^{\mu}\!\bigl(u_2\,e_{J_3}e_{J_4}\bigr).
\]

\emph{Values.}  
Write $\mu=(\mu_1,\mu_2)$ and $\lambda=(\lambda_1,\lambda_2,\lambda_3)$ in descending order. For the $\mu$ part, it is known \cite[Prop.~8.5.2]{ShenAIM2024} that $\chi^{\mu}(u_2 e_{J_3}e_{J_4})=(-1)^{\mu_1}$. For the $\lambda$ part, write $|\lambda|=2\ell+k$, then
\[
\chi^\lambda\!\bigl(u_1 e_{J_1}e_{J_2}e_{K_1}\bigr)
=\sum_{\lambda',\mu'}c^{\lambda}_{\lambda',\mu'}\,
   \chi^{\lambda'}(u_1 e_{J_1}e_{J_2})\,
   \chi^{\mu'}(e_{K_1})
=\sum_{r=0}^{\ell} (-1)^{r}\, c^{\lambda}_{(2\ell-r,r),\,(k)},
\]
since only $\mu'=(k)$ contributes on $K_1$, and $\chi^{\lambda'}(u_1 e_{J_1}e_{J_2})=(-1)^r$ for $\lambda'=(2\ell-r,r)$ while it vanishes when $\lambda'$ has at least three parts. By Pieri’s rule this sum simplifies to $\tfrac{(-1)^A+(-1)^B}{2}$, where $A=\max\{\lambda_3,\,\lambda_2+\lambda_3-k\}$ and $B=\min\{\lambda_2,\,\ell,\,2\ell-\lambda_2\}$.

\medskip
Combining these gives
\[
\chi^{\lambda,\mu}(ue_{W_I})=
\begin{cases}
(-1)^{\tfrac{|\mu|}{2}+\mu_1}
\,\dfrac{(-1)^{A}+(-1)^{B}}{2}, & |\mu|\ \text{even},\\[10pt]
0,&|\mu|\ \text{odd}.
\end{cases}
\]

In particular, $\chi^{\lambda,\mu}(ue_{W_I})\in \{-1,0,1\}$.
\end{enumerate}

These values supply the input for the invariants $d_\chi$, $f_\chi$, and 
$m_{C,\chi}(\zeta)$ in the next subsection.

\subsubsection{Closed formulas}\label{subsec:typeC-numerics}
We now specialize the general formulas to obtain explicit spectral data for type $\mathbf{C}_n$.  
Fix a type orbit $C$ with initial pair $(i,i{+}j)$, $1\leq i\leq j$, and set
\[
I=S\setminus\{s_i,s_{i+j}\},\qquad P_0=P_I,\qquad 
W_I\cong S_i\times S_{\,j}\times W_{\,k},\quad n=i+j+k.
\]
The orbit length is $c=|C|\in\{2,4\}$, with
\[
c=\begin{cases}
4,& j\neq i,\\
2,& j=i,
\end{cases}
\qquad 
d=\tfrac{2m}{c}\in\{2,4\},\quad 2m=8.
\]

For an irreducible bipartition $(\lambda,\mu)\vdash n$, let $\chi=\chi_q^{\lambda,\mu}$ denote the corresponding representation of $G$, and $\chi_1=\chi_1^{\lambda,\mu}$ the representation of $W_n$. The contribution of $C$ to the zeta factor depends on five invariants \(\ell(w_I),\,d_\chi,\,n_\chi,\,f_\chi,\,m_{C,\chi}(\zeta)\), which we explain in turn.

\paragraph{Explicit invariants}
\emph{Parabolic length.}  
Removing the two nodes leaves components of type $\mathbf{A}_{i-1}$, $\mathbf{A}_{j-1}$, and $\mathbf{C}_{k}$, so
\[
\ell(w_I)=\tfrac{(i-1)i}{2}+\tfrac{(j-1)j}{2}+k^{2}.
\]

\emph{Degrees.}  
The $G$–degree is the generic degree \(d_\chi=d_{\lambda,\mu}(q)\), given by the hook–cohook formula (See \S\ref{subsec:typeC-spectral}(4a)):  
\[
d_{\lambda,\mu}(q)=
\frac{q^{a(S)}\prod_{i=1}^n (q^{2i}-1)}
{2^{b(S)}\prod_{(b,c)\in\mathrm{hooks}(S)} (q^{c-b}-1)\;\prod_{(b,c)\in\mathrm{cohooks}(S)} (q^{c-b}+1)}.
\]

\emph{Multiplicity (\S\ref{subsec:typeC-spectral}(3b)).}  
By the deformation correspondence $1_{W_I}^{W_n}\leftrightarrow \C Ge_{P_0}$, one has
\[
n_\chi=\langle 1_{W_I}^{W_n},\,\chi_1\rangle_{W_n}
=\sum_{a=0}^{i}\ \sum_{b=0}^{j}
K_{\lambda,(a,b,k)}\,K_{\mu,(i-a,\,j-b)}.
\]

\emph{Reflection statistic (\S\ref{subsec:typeC-spectral}(4b)).}  
With \(\kappa(\nu)=\sum_i \binom{\nu_i}{2}-\sum_j \binom{\nu'_j}{2}\), the eigenvalue of the reflection sum is
\[f_\chi =\sum_{t\in \mathcal{R}}
     \Bigl(1+\frac{\chi_1(t)}{\chi_1(1)}\Bigr)=n^2+ \frac{\chi^{\lambda,\mu}(z_{\mathrm{short}})}{\chi^{\lambda,\mu}(1)}+\frac{\chi^{\lambda,\mu}(z_{\mathrm{long}})}{\chi^{\lambda,\mu}(1)}= n^2 + (|\lambda|-|\mu|) + 2(\kappa(\lambda)+\kappa(\mu)).\]

\paragraph{Root splitting}  
The nonzero eigenvalues of the $c$–step operator are of the form 
\(\zeta\,q^{(f_\chi-2\ell(w_I))/d}\) with $\zeta^d=1$, and their multiplicities 
$m_{C,\chi}(\zeta)$ satisfy 
\(\sum_{\zeta^d=1} m_{C,\chi}(\zeta)=n_\chi\). 
By equation~\eqref{eq:mult-identity} in \S\ref{subsec:main-conclusion},
\[
\chi_1(a_{W_0uW_0}^k)=\chi_1(u^ke_{W_0})
=\sum_{\zeta^d=1} m_{C,\chi}(\zeta)\,\zeta^k ,
\]
so the values of $\chi_1(u^ke_{W_0})$ determine the root distribution.

\smallskip
\emph{Generic case ($c=4$, $d=2$).}  
Here $u=w_0$. By \S\ref{subsec:typeC-spectral}(4c), one has 
\(\chi_1(w_0e_{W_0})=(-1)^{|\mu|}n_\chi\). 
Thus the eigenvalues split purely by sign:
\[
m_{C,\chi}(+1)=
\begin{cases}n_\chi,&|\mu|\ \text{even},\\0,&|\mu|\ \text{odd},\end{cases}
\qquad
m_{C,\chi}(-1)=
\begin{cases}0,&|\mu|\ \text{even},\\n_\chi,&|\mu|\ \text{odd}.\end{cases}
\]

\emph{Special case ($c=2$, $d=4$).}  
Now $i=j$ and $u$ is as in \S\ref{subsec:typeC-spectral}(4d), and the eigenvalues lie in 
\(\{1,i,-1,-i\}\). 
We compute their multiplicities using the relations
\[
\sum_{\zeta^4=1} \zeta\,m_{C,\chi}(\zeta)=\chi^{\lambda,\mu}(ue_{W_I}), 
\qquad
\sum_{\zeta^4=1} \zeta^2 m_{C,\chi}(\zeta)=\chi^{\lambda,\mu}(w_0e_{W_I}).
\]
By \S\ref{subsec:typeC-spectral}(4d),
\[
\chi^{\lambda,\mu}(ue_{W_I})=
\begin{cases}
(-1)^{\tfrac{|\mu|}{2}+\mu_1}\,
\dfrac{(-1)^{A}+(-1)^{B}}{2}, & |\mu|\ \text{even},\\[6pt]
0,&|\mu|\ \text{odd},
\end{cases}
\]
with $A=\max\{\lambda_3,\,\lambda_2+\lambda_3-k\}$ and 
$B=\min\{\lambda_2,\,\ell,\,2\ell-\lambda_2\}$ and $\ell=\frac{|\lambda|-k}{2}$. 
Moreover, since $u^2=w_0$, one has 
\(\chi^{\lambda,\mu}(w_0e_{W_I})=(-1)^{|\mu|}n_\chi\).

It follows that:  
If $|\mu|$ is even, then $m_{C,\chi}(i)=m_{C,\chi}(-i)=0$ and 
  \(m_{C,\chi}(\pm1)=(n_\chi\pm\epsilon)/2\), where 
  \(\epsilon=(-1)^{\tfrac{|\mu|}{2}+\mu_1}\tfrac{(-1)^A+(-1)^B}{2}\in\{-1,0,1\}\);  
if $|\mu|$ is odd, then $m_{C,\chi}(1)=m_{C,\chi}(-1)=0$ and 
  \(m_{C,\chi}(i)=m_{C,\chi}(-i)=n_\chi/2\).

\medskip
In summary, the zeta factor contributed by the orbit $C$ is
\[
Z_c(X_2(\mathcal B)|_C,u)
=\prod_{\chi:\,n_\chi\neq 0}\ \prod_{\zeta^{\,d}=1}
\bigl(1-\zeta\,q^{(f_\chi-2\ell(w_I))/d}\,u^{\,c}\bigr)^{-m_{C,\chi}(\zeta)\,d_\chi}.
\]
In the generic case $c=4$ this simplifies to
\[
Z_c(X_2(\mathcal B)|_C,u)
=\prod_{\chi:\,n_\chi\neq 0}
\bigl(1-(-1)^{|\mu|}\,q^{(f_\chi-2\ell(w_I))/2}\,u^{4}\bigr)^{-d_\chi n_\chi}.
\]
In the special case $c=2$, the simplification is given by inserting the above multiplicities 
$m_{C,\chi}(1),m_{C,\chi}(-1),m_{C,\chi}(i),m_{C,\chi}(-i)$.

\paragraph{Examples}
For illustration we record the factorization of the zeta factor in the first nontrivial cases.
For each pair $(n,(i,i{+}j))$ we list the nonzero terms $(\lambda,\mu)\vdash n$
together with the corresponding factor
\[
\prod_{\zeta^{\,d}=1}
\bigl(1-\zeta\,q^{(f_\chi-2\ell(w_I))/d}\,u^{\,c}\bigr)^{[m_{C,\chi}(\zeta)]\,[d_\chi]}
\]
where $c,d$ are as in~\S\ref{subsec:typeC-numerics}.  
The exponents $n_\chi$ and $d_\chi(q)$ are given explicitly; each line shows one irreducible
$(\lambda,\mu)$–contribution to $1/Z_C$.

\medskip
{\small
\noindent\textbf{Case $n=2$, $(i,j)=(1,1)$ (type $\mathbf{C}_2$).}
\[
\begin{array}{|l|l|}
\hline
\textbf{Bipartition $(\lambda,\mu)$} 
& \textbf{Inverse Zeta factor}\\
\hline
((),(1,1))   & (1-q^{0}u^{2})^{[1]\times[q^{4}]}\\[3pt]
((),(2))     & (1+q^{1}u^{2})^{[1]\times[\tfrac{1}{2}q(q^{2}+1)]}\\[3pt]
((1),(1))    & \bigl((1+i q u^{2})(1-i q u^{2})\bigr)^{[1]\times[\tfrac{1}{2}q(q+1)^{2}]}\\[3pt]
((1,1),())   & (1+q^{1}u^{2})^{[1]\times[\tfrac{1}{2}q(q^{2}+1)]}\\[3pt]
((2),())     & (1-q^{2}u^{2})^{[1]\times[1]}\\
\hline
\end{array}
\]
}

\medskip
{\small
\noindent\textbf{Case $n=3$, $(i,j)=(1,2)$ (type $\mathbf{C}_3$).}
\[
\begin{array}{|l|l|}
\hline
\textbf{Bipartition $(\lambda,\mu)$} 
& \textbf{Inverse Zeta factor}\\
\hline
((),(2,1))    & (1+q^{2}u^{4})^{[1]\times[\tfrac{1}{2}(q+1)^{2}q^{4}(q^{2}-q+1)]}\\[3pt]
((),(3))      & (1+q^{5}u^{4})^{[1]\times[\tfrac{1}{2}q(q^{2}-q+1)(q^{2}+1)]}\\[3pt]
((1),(1,1))   & (1-q^{2}u^{4})^{[1]\times[\tfrac{1}{2}q^{4}(q^{2}+1)(q^{2}+q+1)]}\\[3pt]
((1),(2))     & (1-q^{4}u^{4})^{[2]\times[q^{2}(q^{2}-q+1)(q^{2}+q+1)]}\\[3pt]
((1,1),(1))   & (1+q^{3}u^{4})^{[1]\times[q^{3}(q^{2}-q+1)(q^{2}+q+1)]}\\[3pt]
((2),(1))     & (1+q^{5}u^{4})^{[2]\times[\tfrac{1}{2}q(q^{2}+1)(q^{2}+q+1)]}\\[3pt]
((2,1),())    & (1-q^{5}u^{4})^{[1]\times[\tfrac{1}{2}q(q+1)^{2}(q^{2}-q+1)]}\\[3pt]
((3),())      & (1-q^{8}u^{4})^{[1]\times[1]}\\
\hline
\end{array}
\]
}

\medskip
The \textsf{SageMath} scripts for computing the zeta functions and the various invariantsare available at
\href{https://arxiv.org/abs/2405.14395}{arXiv:2405.14395} as the file \texttt{zeta\_buildings\_typeC.ipynb}.

\subsection{Tables of cycles and half-cycle length $m$}\label{sec:4.4}

For each irreducible Coxeter system $(W,S)$, the directed edge types form the set
$S\times S\setminus \Delta(S)$.
We decompose this set into \emph{type orbits} $C$ under the next–type map
(§\ref{sec:partite-decomp}), and we attach to each orbit $C$ an integer $m$
via Luo’s decomposition Theorem~\ref{thm:Luo-decomp}.
The integer $m$ governs the collapse $(T_C)^{2m/c}=D(v_0,v_0,2m)_r$, where $c=|C|$.

\medskip
Some computations were carried out in \textsf{CHEVIE} for \textsf{GAP3};
for exceptional types we follow the simple–reflection labels used in \cite[p.~1592, §85]{GAP344}.
In types $\mathbf{B}/\mathbf{C}/\mathbf{D}$ we place the “branching” at the far end.

\subsubsection*{Part 1: Algorithm}

Let $(W,S)$ be a Coxeter system and write $S=\{s_1,\dots,s_n\}$.
Fix an ordered pair $(t_0,t_1)\in S\times S$ with $t_0\neq t_1$. The next type is recursively defined by
\[
t_{i+1}\;:=\;w_{S-\{t_i\}}\;t_{i-1}\;w_{S-\{t_i\}}\in S\qquad (i\ge 1),
\]
where $w_J$ denotes the longest element of the parabolic subgroup $W_J$.
The resulting orbit is
\[
C=C(t_0,t_1)\;=\;\{(t_i,t_{i+1})\,:\, i\ge 0\},\qquad c=|C|.
\]

Set
\(
w_i:=w_{S-\{t_i\}},w'_i:=w_{S-\{t_i,t_{i+1}\}}
\)
Luo’s theorem implies a factorization of $w'_0w_S$ into the $m$ terms
$w'_{k-1}w_k$ repeated along the cycle, and determines the integer $m$ for which
\[
\ell(w'_0w_S)\;=\;\sum_{k=1}^{m}\ell(w'_{k-1}w_k).
\]
Thus $m$ is obtained by computing the lengths $\ell(w'_{k-1}w_k)$ once on $C$ and summing.

\medskip
The tables in Parts~2–3 record: (i) the orbit decomposition of
$S\times S\setminus\Delta(S)$ under the next–type map, and (ii) the corresponding integer $m$
extracted from the above length identity.

\subsubsection*{Part 2: Classical types: patterns and rank–5 tables}
We use the following Dynkin diagram labeling.

\begin{center}
\begin{tikzpicture}[node distance=1cm, font=\small]
  \node (A1) [circle, draw] {1};
  \node (A2) [circle, draw, right of=A1] {2};
  \node (A3) [right of=A2] {$\cdots$};
  \node (An1) [circle, draw, right of=A3] {n$-$1};
  \node (An) [circle, draw, right of=An1] {n};
  \draw (A1) -- (A2) -- (A3) -- (An1) -- (An);
  \node (LabelA) [below=0.3cm of A3] {Diagram of $\mathbf{A}_n$};

  \node (C1) [circle, draw, right=3.6cm of An] {1};
  \node (C2) [circle, draw, right of=C1] {2};
  \node (C3) [right of=C2] {$\cdots$};
  \node (Cn1) [circle, draw, right of=C3] {n$-$1};
  \node (Cn) [circle, draw, right of=Cn1] {n};
  \draw (C1) -- (C2) -- (C3) -- (Cn1);
  \draw[double] (Cn1) -- (Cn);
  \node (LabelC) [below=0.3cm of C3] {Diagram of $\mathbf{C}_n$};

  \node (D1) [circle, draw, below=3cm of A2] {1};
\node (D2) [circle, draw, right=of D1] {2};
\node (D3) [circle, draw, right=of D2] {3};
\node (Ddots) [right=of D3] {$\cdots$};
\node (Dn2) [circle, draw, right=of Ddots] {$n\!-\!2$};
\node (Dn1) [circle, draw, above right=0.8cm and 1cm of Dn2] {$n\!-\!1$};
\node (Dn)  [circle, draw, below right=0.8cm and 1cm of Dn2] {$n$};

\draw (D1) -- (D2) -- (D3) -- (Ddots) -- (Dn2);
\draw (Dn2) -- (Dn1);
\draw (Dn2) -- (Dn);

\node (LabelD) [below=1cm of Dn2] {Diagram of $\mathbf{D}_n$};

\end{tikzpicture}
\end{center}

\paragraph{Type $\mathbf{A}_{n-1}$}
Here $W=S_n$.
Write $n=i+j+k$ with $i,j,k\ge 1$ and start at the pair $(t_0,t_1)=(s_i,s_{i+j})$.
Then the orbit is
\[
C=\{(i,i{+}j),\,(i{+}j,j),\,(j,j{+}k),\,(j{+}k,k),\,(k,k{+}i),\,(k{+}i,i)\},
\]
so $c=6$ generically and $c=2$ in the balanced case $i=j=k$.
A direct length computation gives
\[
\ell(w'_0w_1)=ij,\quad \ell(w'_1w_2)=ik,\quad \ell(w'_2w_3)=jk,\quad
\ell(w'_0w_S)=\tbinom{n}{2}- \tbinom{i}{2}-\tbinom{j}{2}-\tbinom{k}{2}=ij+ik+jk,
\]
hence $\ell(w'_0w_S)=\sum_{r=0}^2 \ell(w'_r w_{r+1})$ and therefore
\(
{\,m=3\ \text{ in type } A_{n-1}.}
\)

\paragraph{Type $\mathbf{C}_n$}
Here $W=C_2^{\,n}\rtimes S_n$.
Writing $n=i+j+k$ with $i,j\ge 1$ and $k\ge 0$, the orbit through $(s_i,s_{i+j})$ is 
\[
C=\{(i,i{+}j),\,(i{+}j,j),\,(j,i{+}j),\,(i{+}j,i)\},
\]
so $c=4$ generically and $c=2$ when $i=j$.

A parabolic–length check along the four segments of the cycle shows that the sum of
$\ell(w'_{r-1}w_r)$ over one revolution equals $\ell(w'_0w_S)$, hence
\(
{\,m=4\ \text{ in type } C_n.}
\)

\paragraph{Type $\mathbf{D}_n$}
In $\mathbf{D}_n$ several orbit sizes occur ($c\in\{2,3,4,6,8\}$), depending on whether the pair $(t_0,t_{1})$
meets the branching nodes $\{s_{n-1},s_n\}$, whether the label for $t_1$ is double that of $t_0$, and on whether .
A length computation as above yields
\[
{\,m=4\ \text{ generically in type } D_n,\quad
m=3\ \text{ if both } t_0,t_1\in\{s_1,s_{n-1},s_n\}\, .}
\]

\medskip
For concreteness we record rank–5 cycles and $m$ for $\mathbf{A}_5$, $\mathbf{C}_5$, and $\mathbf{D}_5$.

\begin{table}[H]
\centering
\caption{Table for $\mathbf{A}_5$}
\begin{tabular}{|m{8cm}|c|}
\hline
\textbf{Cycles $ s_0, s_1, s_2, \ldots, s_c$} & \textbf{$ m $} \\ \hline
$ 1 \rightarrow 2 \rightarrow 1 \rightarrow 5 \rightarrow 4 \rightarrow 5 \rightarrow 1 $ & 3 \\ \hline
$ 1 \rightarrow 3 \rightarrow 2 \rightarrow 5 \rightarrow 3 \rightarrow 4 \rightarrow 1 $ & 3 \\ \hline
$ 1 \rightarrow 4 \rightarrow 3 \rightarrow 5 \rightarrow 2 \rightarrow 3 \rightarrow 1 $ & 3 \\ \hline
$ 2 \rightarrow 4 \rightarrow 2 $ & 3 \\ \hline
\end{tabular}
\end{table}

\begin{table}[H]
\centering
\caption{Table for $\mathbf{C}_5$ (labels in $\mathbf{C}$-orientation)}
\begin{tabular}{|m{8cm}|c|}
\hline
\textbf{Cycles $ s_0, s_1, s_2, \ldots, s_c$} & \textbf{$ m $} \\ \hline
$ 1 \rightarrow 2 \rightarrow 1 $ & 4 \\ \hline
$ 1 \rightarrow 3 \rightarrow 2 \rightarrow 3 \rightarrow 1 $ & 4 \\ \hline
$ 1 \rightarrow 4 \rightarrow 3 \rightarrow 4 \rightarrow 1 $ & 4 \\ \hline
$ 1 \rightarrow 5 \rightarrow 4 \rightarrow 5 \rightarrow 1 $ & 4 \\ \hline
$ 2 \rightarrow 4 \rightarrow 2 $ & 4 \\ \hline
$ 2 \rightarrow 5 \rightarrow 3 \rightarrow 5 \rightarrow 2 $ & 4 \\ \hline
\end{tabular}
\end{table}

\begin{table}[H]
\centering
\caption{Table for $\mathbf{D}_5$}
\begin{tabular}{|m{9cm}|c|}
\hline
\textbf{Cycles $ s_0, s_1, s_2, \ldots, s_c$} & \textbf{$ m $} \\ \hline
$ 1 \to 2 \to 1 $ & 4 \\ \hline
$ 1 \to 3 \to 2 \to 3 \to 1 $ & 4 \\ \hline
$ 1 \to 4 \to 5 \to 1 \to 5 \to 4 \to 1 $ & 3 \\ \hline
$ 2 \to 4 \to 3 \to 4 \to 2 \to 5 \to 3 \to 5 \to 2 $ & 4 \\ \hline
\end{tabular}
\end{table}

\subsubsection*{Part 3: Exceptional types: Complete lists}

For each exceptional type $\mathbf{G}_2,\mathbf{F}_4,\mathbf{E}_6,\mathbf{E}_7,\mathbf{E}_8$, we list the type orbits
and the associated $m$ (labels as in \textsf{GAP3}/\textsf{CHEVIE}). 
The verification uses the same length–sum criterion
$\ell(w'_0w_S)=\sum \ell(w'_{k-1}w_k)$ along the cycle.

\begin{center}
\begin{tikzpicture}[node distance=1cm, font=\small]
    \node (E6-1) [circle, draw, ] {1};
    \node (E6-2) [circle, draw, right of=E6-1] {3};
    \node (E6-3) [circle, draw, right of=E6-2] {4};
    \node (E6-4) [circle, draw, right of=E6-3] {5};
    \node (E6-5) [circle, draw, right of=E6-4] {6};
    \node (E6-6) [circle, draw, below=1cm of E6-3]{2};
    \draw (E6-1) -- (E6-2) -- (E6-3) -- (E6-4) -- (E6-5);
    \draw (E6-3) -- (E6-6);

    \node (E7-1) [circle, draw, right=1cm of E6-5] {1};
    \node (E7-2) [circle, draw, right of=E7-1] {3};
    \node (E7-3) [circle, draw, right of=E7-2] {4};
    \node (E7-4) [circle, draw, right of=E7-3] {5};
    \node (E7-5) [circle, draw, right of=E7-4] {6};
    \node (E7-6) [circle, draw, right of=E7-5] {7};
    \node (E7-7) [circle, draw, below=1cm of E7-3] {2};
    \draw (E7-1) -- (E7-2) -- (E7-3) -- (E7-4) -- (E7-5) -- (E7-6);
    \draw (E7-3) -- (E7-7);

    \node (E8-1) [circle, draw, below=3cm of E6-1] {1};
    \node (E8-2) [circle, draw, right of=E8-1] {3};
    \node (E8-3) [circle, draw, right of=E8-2] {4};
    \node (E8-4) [circle, draw, right of=E8-3] {5};
    \node (E8-5) [circle, draw, right of=E8-4] {6};
    \node (E8-6) [circle, draw, right of=E8-5] {7};
    \node (E8-7) [circle, draw, right of=E8-6] {8};
    \node (E8-8) [circle, draw, below=1cm of E8-3] {2};
    \draw (E8-1) -- (E8-2) -- (E8-3) -- (E8-4) -- (E8-5) -- (E8-6) -- (E8-7);
    \draw (E8-3) -- (E8-8);
    \node (LabelE) [below =1cm of E8-6]  {Diagram of $\mathbf{E}_6,\mathbf{E}_7,\mathbf{E}_8$};
\end{tikzpicture}
\end{center}

\begin{table}[H]
\centering
\caption{Table for $\mathbf{G}_2$}
\begin{tabular}{|m{4cm}|c|}
\hline
\textbf{Cycles $ s_0, s_1, s_2, \ldots, s_c$} & \textbf{$ m $} \\ \hline
$ 1 \rightarrow 2 \rightarrow 1 $ & 6 \\ \hline
\end{tabular}
\end{table}

\begin{table}[H]
\centering
\caption{Table for $\mathbf{F}_4$}
\begin{tabular}{|m{9cm}|c|}
\hline
\textbf{Cycles $ s_0, s_1, s_2, \ldots, s_c$} & \textbf{$ m $} \\ \hline
$ 1 \rightarrow 2 \rightarrow 1 $ & 6 \\ \hline
$ 1 \rightarrow 3 \rightarrow 2 \rightarrow 4 \rightarrow 2 \rightarrow 3 \rightarrow 1 $ & 6 \\ \hline
$ 1 \rightarrow 4 \rightarrow 1 $ & 4 \\ \hline
$ 3 \rightarrow 4 \rightarrow 3 $ & 6 \\ \hline
\end{tabular}
\end{table}

\begin{table}[H]
\centering
\caption{Table for $\mathbf{E}_6$}
\begin{tabular}{|m{12cm}|c|}
\hline
\textbf{Cycles $ s_0, s_1, s_2, \ldots, s_c$} & \textbf{$ m $} \\ \hline
$ 1 \rightarrow 2 \rightarrow 6 \rightarrow 5 \rightarrow 6 \rightarrow 2 \rightarrow 1 \rightarrow 3 \rightarrow 1 $ & 4 \\ \hline
$ 1 \rightarrow 4 \rightarrow 3 \rightarrow 5 \rightarrow 4 \rightarrow 6 \rightarrow 4 \rightarrow 5 \rightarrow 3 \rightarrow 4 \rightarrow 1 $ & 5 \\ \hline
$ 1 \rightarrow 5 \rightarrow 2 \rightarrow 3 \rightarrow 6 \rightarrow 3 \rightarrow 2 \rightarrow 5 \rightarrow 1 $ & 4 \\ \hline
$ 1 \rightarrow 6 \rightarrow 1 $ & 3 \\ \hline
$ 2 \rightarrow 4 \rightarrow 2 $ & 6 \\ \hline
\end{tabular}
\end{table}

\begin{table}[H]
\centering
\caption{Table for $\mathbf{E}_7$}
\begin{tabular}{|m{12cm}|c|}
\hline
\textbf{Cycles $ s_0, s_1, s_2, \ldots, s_c$} & \textbf{$ m $} \\ \hline
$ 1 \rightarrow 2 \rightarrow 7 \rightarrow 2 \rightarrow 1 $ & 4 \\ \hline
$ 1 \rightarrow 3 \rightarrow 1 $ & 6 \\ \hline
$ 1 \rightarrow 4 \rightarrow 3 \rightarrow 6 \rightarrow 3 \rightarrow 4 \rightarrow 1 $ & 6 \\ \hline
$ 1 \rightarrow 5 \rightarrow 2 \rightarrow 4 \rightarrow 2 \rightarrow 5 \rightarrow 1 $ & 6 \\ \hline
$ 1 \rightarrow 6 \rightarrow 1 $ & 4 \\ \hline
$ 1 \rightarrow 7 \rightarrow 6 \rightarrow 7 \rightarrow 1 $ & 4 \\ \hline
$ 2 \rightarrow 3 \rightarrow 7 \rightarrow 5 \rightarrow 6 \rightarrow 2 $ & 5 \\ \hline
$ 2 \rightarrow 6 \rightarrow 5 \rightarrow 7 \rightarrow 3 \rightarrow 2 $ & 5 \\ \hline
$ 3 \rightarrow 5 \rightarrow 4 \rightarrow 7 \rightarrow 4 \rightarrow 5 \rightarrow 3 $ & 6 \\ \hline
$ 4 \rightarrow 6 \rightarrow 4 $ & 6 \\ \hline
\end{tabular}
\end{table}

\begin{table}[H]
\centering
\caption{Table for $\mathbf{E}_8$}
\begin{tabular}{|m{12cm}|c|}
\hline
\textbf{Cycles $ s_0, s_1, s_2, \ldots, s_c$} & \textbf{$ m $} \\ \hline
$ 1 \rightarrow 2 \rightarrow 8 \rightarrow 2 \rightarrow 1 \rightarrow 3 \rightarrow 1 $ & 6 \\ \hline
$ 1 \rightarrow 4 \rightarrow 3 \rightarrow 7 \rightarrow 5 \rightarrow 7 \rightarrow 3 \rightarrow 4 \rightarrow 1 $ & 8 \\ \hline
$ 1 \rightarrow 5 \rightarrow 2 \rightarrow 5 \rightarrow 1 $ & 8 \\ \hline
$ 1 \rightarrow 6 \rightarrow 1 $ & 6 \\ \hline
$ 1 \rightarrow 7 \rightarrow 6 \rightarrow 8 \rightarrow 6 \rightarrow 7 \rightarrow 1 $ & 6 \\ \hline
$ 1 \rightarrow 8 \rightarrow 1 $ & 4 \\ \hline
$ 2 \rightarrow 3 \rightarrow 8 \rightarrow 3 \rightarrow 2 \rightarrow 7 \rightarrow 2 $ & 6 \\ \hline
$ 2 \rightarrow 4 \rightarrow 2 \rightarrow 6 \rightarrow 5 \rightarrow 8 \rightarrow 5 \rightarrow 6 \rightarrow 2 $ & 8 \\ \hline
$ 3 \rightarrow 5 \rightarrow 4 \rightarrow 8 \rightarrow 4 \rightarrow 5 \rightarrow 3 \rightarrow 6 \rightarrow 3 $ & 8 \\ \hline
$ 4 \rightarrow 6 \rightarrow 4 \rightarrow 7 \rightarrow 4 $ & 8 \\ \hline
$ 7 \rightarrow 8 \rightarrow 7 $ & 6 \\ \hline
\end{tabular}
\end{table}

\section*{Further Directions}
\addcontentsline{toc}{section}{Further Directions}
The results of this paper suggest some natural extensions, both geometric and representation–theoretic.

\paragraph{(1) Higher zeta functions}  
The edge zeta function considered here encodes geodesics in the $1$–skeleton of the building.  
A next step is to define and study higher–dimensional zeta functions that count closed geodesics in higher skeleton of the building.  
At the top dimension, one obtains the \emph{gallery zeta function}, enumerating closed chamber galleries and generalizing the edge zeta function considered here.  
Such higher zeta functions should encode finer representation–theoretic and geometric information, potentially connecting to the cohomology of the building and to higher–rank analogues of Ihara’s determinant formulas.

\paragraph{(2) Regular elements and character values}  
In Section~\ref{subsec:typeC-spectral}, the quantities $\chi^{\lambda,\mu}(u e_{W_I})$ in type~$\mathbf{C}$ (and their analogues $\chi^{\lambda}(u e_{S_\mu})$ in~\cite[\S8.3]{ShenAIM2024}) take values in $\{0,\pm1\}$.  
This pattern resembles the character values of Coxeter elements or some regular elements in Weyl groups~\cite{springer1974}.

It would be interesting to formalize this connection by defining \emph{block Coxeter elements} or more generally \emph{generalized regular elements} in parabolic subgroups, and to study their character values systematically.  
Such a theory might provide a new representation–theoretic framework for understanding the sign and vanishing patterns observed in these zeta–factor computations.

\begin{acknowledgements}
  This research was funded by the Deutsche Forschungsgemeinschaft (DFG,
  German Research Foundation) -- Project-ID 491392403 -- TRR 358.
\end{acknowledgements}

\printbibliography[heading=bibintoc,title={Bibliography}]
\newpage
\end{document}